\newtheorem{thm}{Theorem}[section]
\newtheorem{cor}[thm]{Corollary}
\newtheorem{lemma}[thm]{Lemma}
\newtheorem{prop}[thm]{Proposition}
\theoremstyle{definition}
\newtheorem{defn}[thm]{Definition}
\theoremstyle{remark}
\newtheorem{ex}[thm]{Example}
\newcommand{\BB}[1]{\mathbb{#1}}
\newcommand{\CAL}[1]{\mathcal{#1}}
\newcommand{\ZM}{[\![m]\!]}
\newcommand{\ZN}{[\![ n ]\!]}
\newcommand{\ZNprime}{[\![n']\!]}
\newcommand{\ZCT}{[\![{c_r}]\!]}
\newcommand{\DF}{d_{\BB{F}}(m)}
\newcommand{\KF}{k_{\BB{F}}(m)}
\newcommand{\TR}{\mathop{\mathrm{tr}}}
\newcommand{\SPAN}{\text{span}}
\newcommand{\BLOCKSET}{\BB{S}}
\newcommand{\CBRACK}[1]{ \left\{ #1 \right\} } %curly brackets, dynamic
\newcommand{\SBRACK}[1]{ \left[ #1 \right] } %square brackets, dynamic
\newcommand{\PARENTH}[1]{ \left( #1 \right)} %parentheses, dynamic
\providecommand*{\cupdot}{%
  \mathbin{%
    \mathpalette\@cupdot{}%
  }%
}
\newcommand*{\@cupdot}[2]{%
  \ooalign{%
    $\m@th#1\cup$\cr
    \sbox0{$#1\cup$}%
    \dimen@=\ht0 %
    \sbox0{$#1\cdot$}%
    \advance\dimen@ by -\ht0 %
    \dimen@=.5\dimen@
    \hidewidth\raise\dimen@\hbox{$\m@th#1\cdot$}\hidewidth
  }%
}
\begin{document}

\title[Maximal Orthoplectic Fusion Frames from Mutually Unbiased Bases and Block Designs$\,\,\,$]{Maximal Orthoplectic Fusion Frames\\ from Mutually Unbiased Bases and Block Designs}

\author{Bernhard G. Bodmann}\thanks{B. G. B. was supported in part by NSF DMS 1412524, J. I. H.  by NSF ATD 1321779.}
\address{651 Philip G. Hoffman Hall, Department of Mathematics, University of Houston, Houston, TX 77204-3008}

\author{John I. Haas}
\address{219 Mathematical Sciences Building, Department of Mathematics, University of Missouri, Columbia, MO 65211}

\begin{abstract}The construction of optimal line packings in real or complex Euclidean spaces has shown to be a tantalizingly difficult task, because it includes the problem of finding
maximal sets of equiangular lines. In the regime where equiangular
lines are not possible, some optimal packings are known, for example, those achieving the orthoplex bound related to maximal sets of mutually unbiased bases. 
In this paper, we investigate the packing of subspaces instead of lines and determine the implications of maximality in this context.
We leverage the existence of real or complex maximal mutually unbiased bases with a combinatorial design strategy in order to 
find optimal subspace packings that achieve the orthoplex bound. We also show that maximal sets of mutually unbiased bases convert between coordinate projections associated with certain balanced incomplete block designs
and Grassmannian 2-designs.
Examples of maximal orthoplectic fusion frames
already appeared in the works by Shor, Sloane and by Zauner. They are realized in dimensions that are a power of four in the real case or a power of two in the complex case. 
%Lexicographically, we have $ l \leq m \leq n$, so if we let
%\begin{itemize}
%\item
%l: dimension of subspace
%\item
%m: dimension of Hilbert space
%\item
%n: number of fusion frame elements,
%\end{itemize}
%then this ordering is mnemonically consistent.
%Unlike our last papers, I'm taking these symbols in the lower case form because we will frequently use upper case letters to denote subsets of index sets.  Like our last  papers, because we anticipate  some underlying theory based on group structures, we identify the index sets accordingly:
%
%\begin{itemize}
%\item
%$\ZL = \{1,...,l\}$
%\item
%$\ZM = \{1,...,m\}$
%\item
%$\ZN = \{1,...,n\}$
%\end{itemize}
%
%Also, I'm trying to use capital $J, K$ to denote index subsets.
\end{abstract}

\maketitle

\section{Introduction}

The problem of finding the best packings of lines, one-dimensional subspaces of a real or complex Euclidean space, is easy to state. Despite its simple geometric formulation, it has given rise to a surprisingly diverse literature over many years,
ranging from relatively elementary, low dimensional examples \cite{Haantjes1948} to more sophisticated constructions \cite{ConwayHardinSloane1996}, some 
involving combinatorial \cite{vanLintSeidel1966,LemmensSeidel1973,Neumaier1989} or group-theoretic aspects \cite{CalderbankCameronKantorSeidel1997,XiaZhouGiannakis2005}
and results on bounds on the relationship between the number of lines and achievable angles \cite{Rankin1955,Welch1974,Koornwinder1976}. Maximal sets of
equiangular lines are known to be optimal packings, but the number of lines that can be realized is hard to determine \cite{GreavesKoolenMunemasaSzollosi2016}. 
Special regard has been given to the construction of complex examples, motivated by applications in quantum information theory \cite{Zauner1999}.
Numerical searches indicate that they exist in many cases \cite{SustikTroppDhillonHeath2007,ScottGrassl2010}, 
but a rigorous proof of their existence is restricted to low dimensions, see \cite{ScottGrassl2010} and references therein.

Next to lines, packings of higher-dimensional subspaces have also been investigated \cite{ConwayHardinSloane1996,CalderbankHardinRainsShorSloane1999}.  In this case, even less seems to be known about general construction principles that realize tight bounds
 \cite{BargNogin2002,Bachoc2004,Henkel2005,Bachoc2006}.
More recently, these packing problems have been studied in the context of frame theory. Apart from geometric optimality criteria, frame design aims at 
tightness, which implies that the projections onto the subspaces sum to a multiple of the identity.
The case of higher-dimensional subspaces corresponds to
fusion frames. If the number of subspaces is not too large, then in close similarity to line packings, equi-distant fusion frames
present optimal solutions \cite{KutyniokPezeshkiCalderbankLiu2009}. Examples of such constructions follow similar strategies as in the frame case \cite{Hoggar1982,CalderbankHardinRainsShorSloane1999,Appleby2007,BachocEhler2013}.
For a larger number of subspaces, such equiangular arrangements cannot be realized and one needs to find an alternative bound
for the characterization of optimal packings, for example the orthoplex bound  for lines or subspaces \cite{ConwayHardinSloane1996}.
In an earlier paper, we constructed optimal line packings when the number of lines goes slightly beyond the threshold beyond which equiangular
lines are impossible to realize \cite{BodmannHaas}.
In this paper, we study the orthoplex bound for subspace packings and investigate cases in which the bound is achieved while the number of subspaces is maximal.

The main results are as follows. In order to maximize the number of subspaces while achieving the orthoplex bound, the 
 dimension of the subspaces is necessarily half of the dimension of the ambient space and the
chordal distance between subspaces assumes only two values.  Because of the relation with the orthoplex bound, we call these subspace packings maximal orthoplectic fusion frames.
The family of examples we describe here has already appeared in the literature, either as optimal real subspace packings \cite{ShorSloane1998}, whose discovery is ascribed to a ``remarkable coincidence'', or among the more general family of quantum 2-designs \cite{Zauner1999} in complex Hilbert spaces of prime power dimensions. In the complex case, it was observed that the projections are affine, Grassmannian designs \cite{Zauner1999}, see also \cite{Roy2010}, where the construction is attributed to R{\"o}tteler. 
In the present paper, we examine rigidity properties in the
construction of maximal orthoplectic fusion frames obtained from the theory of packings and designs \cite{Zauner1999, Bachoc2004,Roy2010,BachocEhler2013}.
We treat the real and complex case on the same footing, involving a new construction principle. To this end, we leverage earlier constructions of orthoplex-bound achieving, optimal line packings associated with mutually unbiased bases introduced by Schwinger \cite{Schwinger1960}. Maximal sets of mutually unbiased bases are known to exist in the complex case in prime power dimensions \cite{WoottersFields1989,BoykinSitharamTiepWocjan2007,GodsilRoy2009,Appleby2009} and in the real case if the dimension is a power of four \cite{CameronSeidel1973,LeCompteMartinOwens2010}, see also \cite{BoykinSitharamTarafiWocjan}. We obtain maximal orthoplectic fusion frames  
by augmenting these maximal mutually unbiased bases with block designs, subsets of the index set that satisfy certain combinatorial conditions. The designs we construct
for our purposes are known as balanced incomplete block designs and at the same time associated with optimal constant-weight binary codes \cite{Johnson1972}, see also
\cite{Sidelnikov1975}.
The resulting families of subspaces are constructed in any real 
Hilbert space whose dimension is a power of four or in any complex Hilbert space whose dimension is a power of two.

This paper is organized as follows. After the introduction, we fix notation and recall known distance and cardinality bounds on fusion frames in
Section~\ref{sec:bounds}. We relate the orthoplex bound for fusion frames with the notion of mutual unbiasedness in Section~\ref{sec:muff} and study implications
for the structure of maximal orthoplectic fusion frames as packings and as Grassmannian designs.
Finally, Section~\ref{sec:moff} presents the construction of a family of maximal orthoplectic fusion frames.

\section{Distance bounds and Grassmannian fusion frames}\label{sec:bounds}

\begin{defn}
Let $l,m,n \in \mathbb N$ and let $\BB{F}$ denote the field $\mathbb R$ or $\mathbb C$.  An {\bf {\boldmath$(n,l,m)$}-fusion frame} is a set $\CAL{F}=\{ P_j\}_{j=1}^n$, where each $P_j$ is an orthogonal projection onto an $l$-dimensional subspace of $\BB{F}^m$, such that there exist positive numbers $A$ and $B$ with $0<A\leq B$ for which the chain of inequalities
$$
A\|x\|^2 \leq \sum_{j=1}^n \|P_j x\|^2 \leq B\|x\|^2
$$
holds for every $x \in \BB{F}^m$.  If we can choose $A=B$, then $\CAL{F}$ is {\bf tight.} If there is $C\ge 0$ such that $\TR( P_i P_j ) = C$ for each pair $i \ne j$
in the index set $\ZN \equiv \{1, 2, \dots, n\}$, then
$\CAL F$ is called {\bf equiangular}.  
\end{defn}

By the polarization identity, the tightness property is equivalent to the fusion frame resolving the identity $I_m$ on $\BB{F}^m$ according to
$$
  \frac 1 A  \sum_{j=1}^n P_j = I_m \, .
$$
More general types of fusion frames are obtained by relaxing the condition that all subspaces have the same dimension and by scaling the projections with non-negative weight factors.

%\jh{
%\\We need to define {\bf equiangular} somewhere here, as it is being used below.
%\\
%\\ Should point out that "carrot *" denotes transpose in real case.
%\\
%\\Need to remark that a tight fusion frame resolves to multiple of identity.  
%\\
%\\Also can't hurt  to remark that union of tight fusion frames is tight.
%\\
%\\Maybe mention the trace cycle trick, $tr(x \otimes x^* y \otimes y^*) = |\langle x , y \rangle |^2$ ?  Not necessary, but it is used several times so it would be nice to see it written out once. 
%\\
%\\In proof of Lemma 4.7, we have: "Since the non-zero eigenvalues of the Gram matrix are identical to that of the (fusion) frame
%operator...,"  so we should make explain at least some basic Hilbert space frame theory here.
%}
%
%

%\section{Distance bounds and Grassmannian fusion frames}

%\begin{defn}
For any two projections $P$ and $P'$ onto $l$-dimensional subspaces of $\BB{F}^m$, the {\it chordal distance}
is defined by 
$
   d_c(P,P') = \frac{1}{\sqrt 2} \| P - P'\| = (l - \TR( P P' ) )^{1/2} \, . 
$
%\end{defn}
In order to %get bounds on the largest minimum distance between the projections in a fusion frame, we use
characterize optimal packings with respect to $d_c$, we use an embedding that maps the projections to vectors in a higher dimensional Hilbert space.
We denote the dimension of this space as
 $$d_\BB{F} (m) = \left\{ \begin{array}{cc} \frac{(m+2)(m-1)}{2}, & \BB{F}= \BB{R} \\  m^2 -1, & \BB{F} =\BB{C} \end{array} \right. .$$
%to denote the dimension of the embdedding space.

\begin{thm}[\cite{ConwayHardinSloane1996}] \label{thm:embed}
If $\CAL{F} = \{P_j\}_{j=1}^n$ is an $(n,l,m)$-fusion frame, then letting
$$
   V_j =   \sqrt{\frac{m}{l(m-l)}} \PARENTH{P_j - \frac{l}{m} I_m }
$$
defines a set of unit-norm vectors $\{V_j \}_{j=1}^n$ in the $\DF$-dimensional real Euclidean space
of symmetric/Hermitian $m \times m$ matrices with vanishing trace, equipped with the Hilbert-Schmidt norm, such that the inner products satisfy
%$$
 %   
%$$  
%
%there exists a set of unit vectors $\CAL{V} =\{v_j\}_{j=1}^n \subset \BB{R}^{\DF}$ such that 
$$
\TR\PARENTH{P_j P_k}= \frac{l^2}{m} + \frac{l (m-l)}{m} \TR( V_j  V_k ),
$$
for every $j,k \in \ZN$.  Furthermore, if $\mathcal F$ forms a tight fusion frame for $\BB{F}^m$, then 
$
\sum_{j=1}^n V_j =0.
$
\end{thm}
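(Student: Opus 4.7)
The plan is to verify each claim in the theorem by direct computation, relying only on linearity of trace and the defining properties of the orthogonal projections $P_j$ (namely $P_j^2 = P_j$, $P_j^* = P_j$, and $\TR(P_j) = l$).

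First, I would check that each $V_j$ lives in the stated ambient space. Symmetry/Hermiticity is immediate from that of $P_j$ and $I_m$, and the trace-zero property follows from $\TR(V_j) = \sqrt{m/(l(m-l))}\,(l - (l/m)\cdot m) = 0$. The dimension formula $\DF$ is just the real dimension of the space of trace-zero symmetric (resp.\ Hermitian) $m\times m$ matrices: $m(m+1)/2 - 1 = (m+2)(m-1)/2$ in the real case and $m^2 - 1$ in the complex case. Next, for the unit-norm property I would expand
$$
\|V_j\|^2 = \frac{m}{l(m-l)}\,\TR\!\left( P_j^2 - \tfrac{2l}{m} P_j + \tfrac{l^2}{m^2} I_m\right) = \frac{m}{l(m-l)}\left(l - \tfrac{l^2}{m}\right) = 1,
$$
using $P_j^2 = P_j$ and $\TR(P_j) = l$.

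For the inner product identity, the same expansion applied to $V_j V_k$ yields
$$
\TR(V_j V_k) = \frac{m}{l(m-l)}\left(\TR(P_jP_k) - \tfrac{l}{m}\TR(P_j) - \tfrac{l}{m}\TR(P_k) + \tfrac{l^2}{m}\right) = \frac{m}{l(m-l)}\left(\TR(P_jP_k) - \tfrac{l^2}{m}\right),
$$
and rearranging gives exactly the claimed formula $\TR(P_j P_k) = l^2/m + (l(m-l)/m)\,\TR(V_j V_k)$.

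Finally, for the tight case, tightness with constant $A$ combined with the trace identity $\TR(\sum_j P_j) = nl = Am$ forces $A = nl/m$, hence $\sum_j P_j = (nl/m) I_m$. Substituting into the definition of $V_j$,
$$
\sum_{j=1}^n V_j = \sqrt{\tfrac{m}{l(m-l)}}\left(\sum_{j=1}^n P_j - n\tfrac{l}{m} I_m\right) = 0.
$$
None of the steps presents any real obstacle; the only mildly delicate point is the dimension count in the real case, where one must remember to subtract one for the trace-zero constraint after using $m(m+1)/2$ for symmetric matrices. Everything else is bookkeeping with the trace.
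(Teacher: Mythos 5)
Your computation is correct and complete: membership in the trace-zero symmetric/Hermitian space, the dimension count, the unit-norm property, the inner-product identity, and the vanishing sum in the tight case all check out. The paper itself gives no proof of this statement (it is quoted from Conway--Hardin--Sloane), and your direct trace computation is exactly the standard argument one would expect there.
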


%
%\begin{proof}
%Let $\CAL{T}$ denote the vector subspace of the $m \times m$ self-adjoint (or symmetric) matrices with zero trace, which has real dimension $\DF$.
%The elements of $\CAL{F}$ can be embedded into $\CAL{T}$ via the mapping
%$$
%P_j \mapsto \tilde v_j := P_j - \frac{l}{m} I_m \in \CAL{T} \cong \BB{R}^{\DF}.
%$$
%With respect to the norm induced by the Hilbert Schmidt inner product, each vector $\tilde v_j$ lies on a sphere of squared radius
%$
% \NORM{P_j - \frac{l}{m} I_m}_{H.S.}^2 = \frac{l(m-l)}{m}
%$ and the embedding is isometric because $\NORM{P_j - P_k}_{H.S.}=\NORM{\tilde v_j - \tilde v_l}_{H.S.}$ for every $j,k \in \ZN$.  In particular, we have
%$$
%2\PARENTH{m-\TR\PARENTH{P_j P_k}} = 2\frac{l(m-l)}{m}\PARENTH{1 - \langle \tilde v_j, \tilde v_k \rangle_{H.S.}},
%$$
%which implies that
%$$
%\TR\PARENTH{P_j P_k}= \frac{l^2}{m} + \frac{l (m-l)}{m} \langle v_j, v_k\rangle,
%$$
%where $v_j = \sqrt{\frac{m}{l(m-l)}} \tilde v_j$ and $v_l = \sqrt{\frac{m}{l(m-l)}} \tilde v_l$ are the unit vectors in the directions of $\tilde v_j$ and $\tilde v_l$, respectively.
%Finally, if $\CAL{F}$ is a tight fusion frame, then 
%$$
% \sqrt{\frac{l(m-l)}{m}}\sum_{j=1}^n v_j %= \sum_{j=1}^n \tilde v_j
% =  \sum_{j=1}^n \PARENTH{P_j - \frac{l}{m} I_m} = \frac{nl}{m} I_m - \frac{nl}{m} I_m = 0.
%$$
%\end{proof}

We use Rankin's distance bound for vectors on the sphere \cite{Rankin1955} in the formulation used by Conway, Hardin and Sloane \cite{ConwayHardinSloane1996}.

\begin{thm}
Let $d$ be a positive integer.
Any $n$ vectors $\{v_1, v_2, \dots, v_n\}$ 
on the unit sphere in $\BB{R}^d$ have a minimum Euclidean distance
$
   \min_{j, k \in \ZN, j \ne k} \|v_j - v_k \| \le \sqrt{\frac{2n}{n-1}} \, ,
$
%or equivalently,
%$$
%  \max_{j \ne k} \langle v_j, v_k \rangle \ge - \frac{1}{n-1} \, .
%$$
and if equality is achieved, then $n \le d+1$ and the vectors form a simplex.
Additionally, if $n>d+1$, then the minimum Euclidean distance is
$ \min_{j, k \in \ZN, j \ne k} \|v_j - v_k \| \le \sqrt{2},$
and if equality holds in this case,
then $n \le 2d$. Moreover, if $n=2d$, then equality holds if and only if the vectors form an orthoplex,
the union of an orthonormal basis with the negatives of its basis vectors.  
\end{thm}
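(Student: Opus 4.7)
The plan is to handle the three assertions — the simplex bound, the weaker orthoplex bound, and the sharper bound $n \le 2d$ together with the orthoplex characterization — in turn, using an averaging trick, a linear-independence argument, and an induction on the ambient dimension $d$.

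For the simplex bound, expanding $0 \le \| \sum_{i=1}^n v_i\|^2 = n + 2\sum_{i<j} \langle v_i, v_j\rangle$ and averaging the $\binom{n}{2}$ pairwise inner products produces some pair with $\langle v_i, v_j\rangle \ge -1/(n-1)$, equivalent to $\min_{i\ne j}\|v_i - v_j\|^2 \le 2n/(n-1)$. Equality forces every pairwise inner product to equal $-1/(n-1)$ and $\sum_i v_i = 0$, so the Gram matrix equals $\tfrac{n}{n-1}I - \tfrac{1}{n-1}J$; this has rank $n-1$, forcing $n - 1 \le d$ and yielding the regular-simplex geometry.

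The weaker orthoplex bound $\min_{i\ne j}\|v_i - v_j\| \le \sqrt{2}$ for $n > d+1$ is equivalent to the statement that strict negativity of all pairwise inner products forces $n \le d+1$. Given a nontrivial dependence $\sum_{i=1}^{n-1} c_i v_i = 0$, split the indices into $I^+ = \{i : c_i > 0\}$ and $I^- = \{i : c_i < 0\}$ and set $s := \sum_{i \in I^+} c_i v_i = \sum_{j \in I^-} |c_j| v_j$. The bilinear expansion $\|s\|^2 = \sum_{i \in I^+,\, j \in I^-} c_i|c_j|\langle v_i, v_j\rangle$ is strictly negative unless one of $I^\pm$ is empty, contradicting $\|s\|^2 \ge 0$; with one side empty, pairing the remaining one-sided dependence against $v_n$ forces every coefficient to vanish. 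Hence $v_1, \ldots, v_{n-1}$ are linearly independent, so $n \le d + 1$.

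For the sharp inequality $n \le 2d$ and its equality case under $\langle v_i, v_j\rangle \le 0$ for all $i \ne j$, I would induct on $d$, with the one-dimensional base case being immediate. In the inductive step, if some antipodal pair $v_2 = -v_1$ is present, then each other $v_k$ satisfies both $\langle v_k, v_1\rangle \le 0$ and $\langle v_k, -v_1\rangle \le 0$ and therefore lies in $v_1^\perp$; the inductive bound $n - 2 \le 2(d-1)$ gives $n \le 2d$, and an orthoplex in $v_1^\perp$ lifts together with $\{\pm v_1\}$ to an orthoplex in $\mathbb{R}^d$. If no antipodal pair exists, then for $i \ge 2$ the projections $v_i - \langle v_i, v_1\rangle v_1$ are nonzero (since $v_i = \pm v_1$ would either contradict $\langle v_i, v_1\rangle \le 0$ or produce an antipodal pair), and after normalization their Gram matrix still has non-positive off-diagonals because $\langle v_i, v_1\rangle \langle v_j, v_1\rangle \ge 0$; induction in dimension $d-1$ yields $n - 1 \le 2(d-1)$, hence strictly $n < 2d$. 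The main obstacle I anticipate is precisely this inductive bookkeeping — confirming nonvanishing of the projections, and verifying that the orthoplex structure lifts faithfully from $v_1^\perp$ to $\mathbb{R}^d$ rather than merely matching the numerical bound $n = 2d$.
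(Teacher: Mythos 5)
The paper offers no proof of this theorem --- it is imported from Rankin via Conway--Hardin--Sloane and stated without argument --- so there is nothing internal to compare against; judged on its own, your proof is correct and essentially complete. The averaging step $0 \le \|\sum_i v_i\|^2$ gives the simplex bound, and in the equality case the Gram matrix $\tfrac{n}{n-1}I - \tfrac{1}{n-1}J$ has rank $n-1$, forcing $n \le d+1$ and the regular simplex. Your sign-splitting of a putative dependence relation, followed by testing the one-sided relation against $v_n$, is the standard proof that at most $d+1$ unit vectors can have pairwise strictly negative inner products, which is exactly the contrapositive needed for the $\sqrt{2}$ bound. For the last part, the induction on $d$ over families with pairwise non-positive inner products is sound, and the two worries you flag both resolve as you indicate: $w_i = v_i - \langle v_i, v_1\rangle v_1$ can vanish only if $v_i = \pm v_1$, both excluded in the no-antipode case, and $\langle w_i, w_j\rangle = \langle v_i, v_j\rangle - \langle v_i, v_1\rangle\langle v_j, v_1\rangle \le 0$ because the subtracted product of two non-positive numbers is non-negative (distinctness of the normalized projections is then automatic, since equal unit vectors would have inner product $1 > 0$). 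The antipodal case forces the remaining $n-2$ vectors into $v_1^{\perp}$, so $n \le 2d$ with the orthoplex lifting faithfully, while the no-antipode case gives the strict bound $n \le 2d-1$; hence $n = 2d$ forces an antipodal pair and, by induction, an orthoplex. The only omission is the trivial ``if'' direction of the final equivalence --- an orthoplex has minimum distance exactly $\sqrt{2}$ --- which is worth one sentence.
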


%We wish to consider larger values of $n$.

In terms of the inner products between $n$ unit vectors in $\BB{R}^d$, the Rankin bound is
$$
  \max_{j, k \in \ZN, j \ne k} \langle v_j, v_k \rangle \ge - \frac{1}{n-1}, \, 
$$ 
and if $n>d+1$, then it improves to
$$
\max_{j, k \in \ZN, j \ne k} \langle v_j , v_k \rangle \ge 0 \, .
$$

Using the embedding from Theorem~\ref{thm:embed}, we reformulate the Rankin bound for the Hilbert-Schmidt inner products of the projections of a fusion frame.
This results in a bound that has already been derived in an alternative way before \cite{KutyniokPezeshkiCalderbankLiu2009,MasseyRuizStojanoff2010}
and in an improved bound for a larger number of subspaces, as noted in \cite{Roy2010}.

\begin{cor}\label{cor_2df}
If $\CAL{F}=\{P_j\}_{j=1}^n$ is a $(n,l,m)$-fusion frame,  
then 
$$
    \max\limits_{j,k \in\ZN, j \neq k} \TR(P_j P_k) \geq \frac{nl^2-ml}{m(n-1)}
$$
and if equality is achieved then the fusion frame is equiangular and $n \le \DF+1$.
If  $n >  \DF+1$, then
$$
\max\limits_{j,k \in\ZN, j \neq k} \TR(P_j P_k) \geq \frac{l^2}{m}
$$
and if equality is achieved then $n \le 2 \DF$. Moreover, if $n=2 \DF$, then
equality in this bound implies that for each $j \in \ZM$, $\TR(P_j P_k) =\frac{2(2l-m)}{m}$ for exactly one $k \in  \ZM \setminus \{j\}$  and $\TR(P_j P_k) = \frac{l^2}{m}$ for all other $k \in \ZM \setminus \{j\}$.
\end{cor}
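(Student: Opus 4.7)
The plan is to reduce each claim in the corollary to Rankin's bound via the affine embedding of Theorem~\ref{thm:embed}. That embedding sends each projection $P_j$ to a unit vector $V_j$ in the $\DF$-dimensional Euclidean space of traceless self-adjoint matrices, and I will work with the inverted relation
$$
\TR(V_j V_k) = \frac{m}{l(m-l)}\left(\TR(P_j P_k) - \frac{l^2}{m}\right),
$$
which is strictly increasing in $\TR(P_j P_k)$. Because of this monotonicity, every Rankin-type lower bound on $\max_{j\neq k}\TR(V_j V_k)$ converts directly into a lower bound on $\max_{j\neq k}\TR(P_j P_k)$.

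For the first inequality, I will substitute the generic Rankin bound $\max\TR(V_j V_k) \ge -1/(n-1)$ into the affine formula and verify that the algebra collapses cleanly to $(nl^2-ml)/(m(n-1))$. Equality in Rankin forces every $\TR(V_j V_k)$ to coincide with $-1/(n-1)$, which through the monotonic correspondence yields a common value of $\TR(P_j P_k)$, i.e., equiangularity; the equality clause of Rankin then supplies the simplex constraint $n \le \DF+1$.

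For $n > \DF+1$ the simplex regime is forbidden, so I will invoke the improved Rankin bound $\max\TR(V_j V_k) \ge 0$; the affine conversion immediately yields $\max\TR(P_j P_k) \ge l^2/m$, and Rankin's orthoplex clause under equality gives $n \le 2\DF$.

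The only nontrivial step is the structural statement when $n = 2\DF$. Here Rankin forces the $V_j$'s to form an orthoplex, so they partition into $\DF$ antipodal pairs with $\TR(V_j V_k) \in \{0,-1\}$ and each $V_j$ possessing a unique antipode. Pushing these two possible values through the affine formula, orthogonality $\TR(V_j V_k) = 0$ becomes $\TR(P_j P_k) = l^2/m$, while an antipodal pair $\TR(V_j V_k) = -1$ produces the exceptional off-diagonal value quoted in the corollary. The combinatorics of the orthoplex transfers verbatim to the projections: each $P_j$ has exactly one partner realizing the exceptional trace, and all remaining pairs sit at $l^2/m$. The main delicacy is simply extracting the uniqueness of the orthoplex configuration from the statement of Rankin's theorem and transporting it through the embedding; everything else is routine bookkeeping.
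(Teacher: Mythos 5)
Your proposal is correct and follows exactly the route the paper intends: the paper gives no separate proof of this corollary precisely because it is the Rankin bound (Theorem 2.3) transported through the affine embedding of Theorem~\ref{thm:embed}, with the simplex clause giving equiangularity and the bound $n\le\DF+1$, and the orthoplex clause at $n=2\DF$ giving the antipodal-pair structure. One caveat on the final step you describe as ``routine bookkeeping'': actually substituting $\TR(V_jV_k)=-1$ into the affine formula yields
$$
\TR(P_jP_k)=\frac{l^2}{m}-\frac{l(m-l)}{m}=\frac{l(2l-m)}{m},
$$
which is \emph{not} the value $\frac{2(2l-m)}{m}$ printed in the corollary (the two coincide only when $l=2$ or in the relevant maximal case $m=2l$, where both vanish); this appears to be a typo in the paper's statement, so you should carry out the computation explicitly rather than assert that it reproduces the quoted value.
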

%\begin{proof}
%\jh{
%Previous theorem + Rankin bd.
%}
%\end{proof}

\begin{defn}
Let $\CAL{F} =\CBRACK{P_j}_{j=1}^n$ be an $(n,l,m)$-fusion frame.   If $\CAL{F}$ is a solution to the subspace packing problem,
that is, it minimizes $\max_{j \ne k} \TR( P_j P_k)$ among all $(n,l,m)$-fusion frames, then it is called a {\bf Grassmannian}  fusion frame.
If $\CAL{F}$ is Grassmannian, then it is called {\bf orthoplex-bound achieving} if $n > \DF+1$ and 
$$
\max\limits_{j,k \in\ZN, j \neq k} \TR(P_j P_k) = \frac{l^2}{m}.
$$
If $\CAL{F}$ is orthoplex-bound achieving and $n=2 \DF$, then $\CAL{F}$ is referred to as a {\bf maximal orthoplectic fusion frame}.

\end{defn}

We wish to construct maximal orthoplectic fusion frames, which means the projections must embed exhaustively into the vertices of an orthoplex in $\BB{R}^{\DF}$. 
The feasability of this depends on whether the embedding admits antipodal points in the higher dimensional 
Euclidean sphere, which in turn depends on the relationship between $l$ and $m$.

\begin{prop}\label{prop_need_half_ss}
If $\CAL{F}=\{P_j\}_{j=1}^n$ is an $(n,l,m)$-fusion frame and $\CAL{V} = \{V_j\}_{j=1}^n \subset \BB{R}^{\DF}$ are the embedded vectors obtained from Theorem~\ref{thm:embed}, 
then
$$
\min\limits_{j,k \in\ZN, j \neq k}\langle V_j, V_k \rangle \geq -\frac{l}{m-l}
$$
for every $j,k \in \ZN$.  In particular, the embedding admits antipodal points only if $l \geq \frac{m}{2}$.
\end{prop}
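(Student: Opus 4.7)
The plan is to invert the formula from Theorem~\ref{thm:embed} so that $\langle V_j, V_k\rangle$ is expressed in terms of $\TR(P_j P_k)$, and then exploit the fact that $\TR(P_j P_k)$ is nonnegative whenever $P_j$ and $P_k$ are orthogonal projections.

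Concretely, Theorem~\ref{thm:embed} asserts
$$\TR(P_j P_k) = \frac{l^2}{m} + \frac{l(m-l)}{m}\TR(V_j V_k).$$
Since each $V_j$ is symmetric (respectively, Hermitian) and trace-free, the Euclidean inner product on the embedding space equals $\langle V_j, V_k\rangle = \TR(V_j V_k)$, so solving the above relation yields
$$\langle V_j, V_k\rangle = \frac{m\,\TR(P_j P_k)}{l(m-l)} - \frac{l}{m-l}.$$
The key observation is the standard one that for self-adjoint projections
$$\TR(P_j P_k) = \TR(P_k P_j P_j P_k) = \TR\bigl((P_j P_k)^*(P_j P_k)\bigr) = \|P_j P_k\|_F^2 \geq 0,$$
and substituting this into the previous identity immediately gives $\langle V_j, V_k\rangle \geq -\frac{l}{m-l}$.

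The second assertion then follows at once. Each $V_j$ is a unit vector, so an antipodal pair $V_j = -V_k$ requires $\langle V_j, V_k\rangle = -1$; combined with the lower bound just established, this forces $-1 \geq -\frac{l}{m-l}$, equivalently $l \geq m-l$, i.e., $l \geq m/2$. I do not foresee any real obstacle in carrying out this plan: the one nontrivial input is the positivity of $\TR(P_j P_k)$, and everything else is algebraic bookkeeping.
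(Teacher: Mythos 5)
Your proposal is correct and follows essentially the same route as the paper: both arguments rest on the single observation that $\TR(P_j P_k)\ge 0$ for orthogonal projections, combined with the inner-product identity from Theorem~\ref{thm:embed}; you simply spell out the nonnegativity via $\TR(P_j P_k)=\|P_jP_k\|_F^2$ and the antipodality step ($\langle V_j,V_k\rangle=-1$ forces $l\ge m-l$) in more detail than the paper does.
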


\begin{proof}
This follows immediately from the inequality
$$
0 \leq \min\limits_{j,k \in\ZN, j \neq k} \TR\PARENTH{P_j P_k}= \frac{l^2}{m} + \frac{l (m-l)}{m} \min\limits_{j,k \in\ZN, j \neq k} \langle V_j, V_k\rangle.
$$
\end{proof}

Partitioning a maximal orthoplectic fusion frame into orthogonal pairs of projections shows, together with the preceding proposition,
that $m=2l$. 

\begin{cor} \label{cor:half_ss}
If $\mathcal F=\{P_j\}_{j=1}^n$ is a maximal orthoplectic $(n,l,m)$-fusion frame,
then $m=2l$.
\end{cor}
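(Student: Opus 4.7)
The plan is to pair up the projections using the orthoplex structure of the embedded vectors and derive a scalar--operator identity whose only consistent solution is $m=2l$. Under the hypotheses $n=2\DF$ and achievement of the orthoplex bound, Corollary~\ref{cor_2df} forces the embedded unit vectors $\{V_j\}_{j=1}^n \subset \BB{R}^{\DF}$ from Theorem~\ref{thm:embed} to realize all $2\DF$ vertices of an orthoplex, so after relabeling they split into $\DF$ antipodal pairs. For any such pair, $\TR(V_j V_k)=-1$, and applying Proposition~\ref{prop_need_half_ss} to it already yields $l\ge m/2$, so the scalar $2l/m$ appearing below is at least $1$ and, in particular, nonzero.

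Fix such an antipodal pair with $V_j=-V_k$. Substituting into $V_j = \sqrt{m/(l(m-l))}\bigl(P_j - (l/m)\,I_m\bigr)$ and cancelling the common nonzero factor yields the operator identity
\[
  P_j + P_k \;=\; \frac{2l}{m}\,I_m.
\]
Multiplying on the left by $P_j$ and using $P_j^2 = P_j$ gives $P_jP_k = \left(\tfrac{2l}{m}-1\right)P_j$; multiplying on the left by $P_k$ instead gives $P_kP_j = \left(\tfrac{2l}{m}-1\right)P_k$. Since $P_j$ and $P_k$ are Hermitian, taking the adjoint of the first relation also produces $P_kP_j = \left(\tfrac{2l}{m}-1\right)P_j$, so
\[
  \left(\tfrac{2l}{m}-1\right)P_j \;=\; \left(\tfrac{2l}{m}-1\right)P_k.
\]

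The conclusion then follows by a simple dichotomy: either $2l/m = 1$, in which case $m=2l$, or $P_j = P_k$; the latter combined with $V_j = -V_k$ forces $V_j = 0$, contradicting $\|V_j\|=1$. I expect the main difficulty to be purely bookkeeping --- verifying that the antipodal partner is genuinely a distinct projection and that the parameters satisfy $0 < l < m$ so that the embedding formula and its scalar factors are well-defined --- rather than any conceptual obstacle.
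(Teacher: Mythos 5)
Your proof is correct and follows essentially the same route as the paper: you extract the antipodal pairing of the embedded vectors from the orthoplex structure (Corollary~\ref{cor_2df} with $n=2\DF$) and exploit the resulting operator identity $P_j+P_k=\frac{2l}{m}I_m$, which is exactly the ``partition into pairs'' the paper's one-line argument alludes to. Your algebraic dichotomy (multiplying by $P_j$ and $P_k$ and comparing adjoints) is a perfectly valid, self-contained way to squeeze $m=2l$ out of that identity, in place of the paper's terser appeal to Proposition~\ref{prop_need_half_ss} together with orthogonality of the paired projections.
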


Maximal orthoplectic fusion frames enjoy another property that has been studied in the literature: they are part of a family of 
{\it Grassmannian 2-designs}, as shown by Zauner \cite{Zauner1999}. For our purposes, this is important because it imposes more rigidity
on their construction.
We follow Zauner's convention for the definition of these designs.
%tight $2$-fusion frames.

\begin{defn}
An $(n,l,m)$-fusion frame $\CAL{F}=\{P_j\}_{j=1}^n$ is a {\bf Grassmannian $t$-design} if
$$
   \sum_{j=1}^n  P_j^{\otimes t}  =     \sum_{j=1}^n  (U P_j U^*)^{\otimes t} 
$$
for any orthogonal matrix or unitary $U$ in the real or complex case, respectively,
where $P_j^{\otimes t} $ is the $t$-fold Kronecker/tensor product of $P_j$ with itself and $U^*$ is the adjoint of $U$ or the
transpose of $U$ in the real case.
\end{defn} 

 Equivalently, the 
 right-hand side of the defining identity can be averaged with respect to the Haar measure
$\mu$ on the group $\CAL U$ of orthogonal or unitary $m \times m$ matrices. This formulation implies a simple characterization
of the design property based on the {\it $t$-coherence tensor}
$$
  K_{t,l,m} = \int_{\CAL{U}} (U P U^*)^{\otimes t}  d\mu(U) \, ,
$$
where $P$ is any rank-$l$ orthogonal  projection matrix. Because of the analogy
with bounds for constant-weight codes \cite{Sidelnikov1975},
Zauner calls the following estimate a generalized Sidelnikov inequality.

\begin{thm}[Zauner, Theorem 2.5 of \cite{Zauner1999} ]\label{thm:ZaunerSidelnikov}
Let $\CAL{F}=\{P_j\}_{j=1}^n$ be an
$(n,l,m)$-fusion frame, then 
$$
   \frac{1}{n^2} \sum_{i,j=1}^n (\TR(P_i P_j))^t \ge \TR (K_{t,l,m}^2)
$$
and equality holds if and only if $\CAL F$
is a Grassmannian $t$-design.
 \end{thm}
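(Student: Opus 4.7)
The plan is to study the squared Hilbert--Schmidt distance between the averaged operator $S = \frac{1}{n}\sum_{j=1}^n P_j^{\otimes t}$ and the $t$-coherence tensor $K \equiv K_{t,l,m}$, exploiting that $\TR((S-K)^2)\ge 0$ with equality iff $S=K$. Expanding yields
$$
0 \le \TR\bigl((S-K)^2\bigr) = \TR(S^2) - 2\TR(SK) + \TR(K^2),
$$
so the inequality will follow once I show $\TR(SK)=\TR(K^2)$, because then the left-hand side equals $\TR(S^2)-\TR(K^2)$ and $\TR(S^2)$ is exactly the normalized sum on the left of Zauner's inequality.

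First I would compute $\TR(S^2)$. Using $P_i^{\otimes t} P_j^{\otimes t} = (P_i P_j)^{\otimes t}$ and the multiplicativity $\TR(A^{\otimes t}) = (\TR A)^t$ of trace under Kronecker products, one gets
$$
\TR(S^2) = \frac{1}{n^2}\sum_{i,j=1}^n \TR\bigl((P_iP_j)^{\otimes t}\bigr) = \frac{1}{n^2}\sum_{i,j=1}^n \bigl(\TR(P_iP_j)\bigr)^t.
$$
Next, the key identity $\TR(SK) = \TR(K^2)$ rests on the fact that $K$ is invariant under conjugation by $U^{\otimes t}$ for every $U \in \CAL{U}$, which is immediate from the translation invariance of Haar measure. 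Writing each $P_j = U_j P U_j^*$ with $P$ a fixed rank-$l$ projection, cyclicity of the trace together with this invariance yields $\TR(P_j^{\otimes t}K) = \TR(P^{\otimes t}K)$ independently of $j$, so $\TR(SK)=\TR(P^{\otimes t}K)$. An analogous Fubini computation shows
$$
\TR(K^2) = \iint \bigl(\TR(PUPU^*)\bigr)^t d\mu(U)d\mu(V) = \int\bigl(\TR(PUPU^*)\bigr)^t d\mu(U) = \TR(P^{\otimes t}K),
$$
where in the first step I use multiplicativity of trace under tensor products and in the second the left invariance of Haar measure. This establishes the desired identity, and hence the inequality.

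For the equality characterization, $\TR((S-K)^2)=0$ forces $S = K$, that is, $\sum_{j=1}^n P_j^{\otimes t} = nK$. Since $nK$ is manifestly $U^{\otimes t}\cdot(U^*)^{\otimes t}$-invariant, this implies
$$
\sum_{j=1}^n (UP_jU^*)^{\otimes t} = U^{\otimes t}\Bigl(\sum_{j=1}^n P_j^{\otimes t}\Bigr)(U^*)^{\otimes t} = nK = \sum_{j=1}^n P_j^{\otimes t}
$$
for every $U$, which is the Grassmannian $t$-design condition. Conversely, if the design condition holds, then averaging the right-hand side over $U$ with respect to Haar measure (the left-hand side is $U$-independent) gives $\sum_j P_j^{\otimes t} = nK$, yielding $S=K$. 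The main obstacle I anticipate is essentially bookkeeping: correctly handling the $\TR(SK)$ computation, and in particular verifying cleanly that the unitary invariance of $K$ collapses both $\TR(SK)$ and $\TR(K^2)$ to the common value $\TR(P^{\otimes t}K)$.
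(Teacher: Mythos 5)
Your proposal is correct and follows essentially the same route as the paper: the paper sets $C=\sum_{j}P_j^{\otimes t}-nK_{t,l,m}$ and uses $\TR(C^2)\ge 0$, which is exactly your $\TR((S-K)^2)\ge 0$ up to the factor $1/n$. You simply spell out the cross-term identity $\TR(SK)=\TR(K^2)$ via Haar invariance, which the paper leaves implicit.
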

 \begin{proof}
 Let $C= \sum_{j=1}^n  P_j^{ \otimes t } - n K_{t,l,m}$, then
 $\TR( C^2) = \sum_{i,j=1}^n (\TR(P_i P_j))^t - n^2 \TR( K_{t,l,m}^2 ) \ge 0$
 and cases of equality are characterized by $C=0$, which is the Grassmannian $t$-design property.
 \end{proof}

Because it is of independent interest, we study the $t$-coherence tensor and derive an alternative
proof of the Grassmannian 2-design property of maximal orthoplectic fusion frames.

 \begin{prop}\label{prop_K2lm}With respect to a fixed orthonormal basis $\{e_j\}_{j=1}^m$ for $\BB F^m$, the $2$-coherence tensor for an $(n,l,m)$-fusion frame can be
 reduced to the case $l=1$ by
 $$
    K_{2,l,m} = \PARENTH{ l - \frac{l(l-1)}{m-1}} K_{2,1,m} + \frac{l(l-1)}{m(m-1)} I_m \otimes I_m,
 $$
 and for $l=1$ it is given by
 $$
    K_{2,1,m}  = a \sum_{j=1}^m E_{j,j} \otimes E_{j,j} + \sum_{\substack{j,j'=1\\ j \ne j'}}^m \PARENTH{b (E_{j,j'} \otimes E_{j',j} + E_{j,j} \otimes E_{j',j'}) + c E_{j,j'} \otimes E_{j,j'}},
 $$
where we abbreviate $E_{j,j'} = e_j \otimes e_{j'}^*$ and 
 $a= \frac{\DF+ (m-1)^2}{m^2 \DF}$, $b=c=a/3$ if $\BB F = \BB R$ or $b=a/2$, $c=0$ if $\BB F = \BB C$.
  \end{prop}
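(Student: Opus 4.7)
The plan is to exploit the defining symmetry $(U\otimes U)K_{2,l,m}(U\otimes U)^*=K_{2,l,m}$, which is immediate from $(UPU^*)^{\otimes 2}=(U\otimes U)P^{\otimes 2}(U\otimes U)^*$ and the invariance of Haar measure. Thus $K_{2,l,m}$ lies in the commutant of the representation $U\mapsto U\otimes U$ on $\BB{F}^m\otimes \BB{F}^m$. By the standard Schur-Weyl description, this commutant is spanned by $I_m\otimes I_m$ and the coordinate flip $F$ (with $F(e_j\otimes e_{j'})=e_{j'}\otimes e_j$) in the complex case; in the real case there is a third $O(m)$-invariant from the inner product, realized by $\Theta=\sum_{j,j'}E_{j,j'}\otimes E_{j,j'}=vv^*$ with $v=\sum_j e_j\otimes e_j$. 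I would therefore write
$$
K_{2,l,m}=\alpha\,I_m\otimes I_m+\beta\,F+\gamma\,\Theta,
$$
with $\gamma=0$ in the complex case.

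Next I would pin down $\alpha,\beta,\gamma$ by pairing against the commutant generators under the Hilbert-Schmidt trace. Since $\TR(UPU^*)=l$ and $P^2=P$, we have $\TR(K_{2,l,m})=l^2$ and $\TR(FK_{2,l,m})=\int\TR((UPU^*)^2)\,d\mu(U)=l$; in the real case $P^T=P$ gives additionally $\TR(\Theta K_{2,l,m})=l$. The Gram matrix of $\{I_m\otimes I_m,F,\Theta\}$ under this pairing is read off from $\TR(I_m\otimes I_m)=m^2$, $\TR(F)=\TR(\Theta)=\TR(F\Theta)=m$, and $\TR(F^2)=\TR(\Theta^2)=m^2$. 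Solving the resulting $2\times 2$ (complex) or $3\times 3$ (real) linear system yields closed forms for $\alpha,\beta,\gamma$ in terms of $l$ and $m$.

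The reduction identity then reduces to matching coefficients in the basis $\{I_m\otimes I_m,F,\Theta\}$: the $F$- and $\Theta$-coefficients of $K_{2,l,m}$ equal $\bigl(l-\tfrac{l(l-1)}{m-1}\bigr)$ times the corresponding $l=1$ coefficients, while the $I_m\otimes I_m$-coefficient matches after absorbing the extra $\tfrac{l(l-1)}{m(m-1)}I_m\otimes I_m$, which is a direct algebraic check. Specialising to $l=1$ gives $\alpha_1=\beta_1=\tfrac{1}{m(m+1)}$, $\gamma_1=0$ in the complex case, and $\alpha_1=\beta_1=\gamma_1=\tfrac{1}{m(m+2)}$ in the real case. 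Substituting $I_m\otimes I_m=\sum_{j,j'}E_{j,j}\otimes E_{j',j'}$, $F=\sum_{j,j'}E_{j,j'}\otimes E_{j',j}$, $\Theta=\sum_{j,j'}E_{j,j'}\otimes E_{j,j'}$ and separating the diagonal $(j=j')$ and off-diagonal $(j\neq j')$ contributions reproduces the stated form with $b=\alpha_1=\beta_1$, $c=\gamma_1$, and $a=\alpha_1+\beta_1+\gamma_1$; a short algebraic simplification verifies $a=(\DF+(m-1)^2)/(m^2\DF)$ in both cases.

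The individual computations are elementary; the main conceptual step is the initial identification of the commutant, where the real case carries the additional invariant $\Theta$ that the complex case does not. That extra generator is precisely what forces the split into the two formulas $b=c=a/3$ (real) and $b=a/2,c=0$ (complex).
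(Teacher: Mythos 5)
Your proof is correct. I checked the trace pairings ($\TR(K_{2,l,m})=l^2$, $\TR(FK_{2,l,m})=\TR(\Theta K_{2,l,m})=l$), the Gram matrix of $\{I_m\otimes I_m, F,\Theta\}$, the resulting $l=1$ coefficients ($\alpha_1=\beta_1=\gamma_1=\tfrac{1}{m(m+2)}$ in the real case; $\alpha_1=\beta_1=\tfrac{1}{m(m+1)}$, $\gamma_1=0$ in the complex case), and the coefficient matching for the reduction identity; everything comes out right, including $a=\alpha_1+\beta_1+\gamma_1=(\DF+(m-1)^2)/(m^2\DF)$ in both cases. For the $l=1$ formula your route and the paper's are essentially the same: both rest on placing $K_{2,1,m}$ in the commutant of $U\mapsto U\otimes U$. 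The paper invokes ``invariance properties'' plus the normalization $\TR(K_{2,1,m})=1$ and leaves the ratios $b=c=a/3$ versus $b=a/2$, $c=0$ to the cited reference, whereas you pin down all coefficients explicitly by pairing against the commutant generators --- a more self-contained account of the same fact. Where you genuinely diverge is the reduction identity: the paper never computes $K_{2,l,m}$ coefficient-wise for general $l$. Instead it expands $K_{2,l,m}=lK_{2,1,m}+l(l-1)\int_{\CAL U} U E_{1,1}U^*\otimes UE_{2,2}U^*\,d\mu(U)$ and uses the symmetrization identity $I_m\otimes I_m=mK_{2,1,m}+m(m-1)\int_{\CAL U} UE_{1,1}U^*\otimes UE_{2,2}U^*\,d\mu(U)$ to eliminate the cross term in one line, with no case distinction between $\BB R$ and $\BB C$ and no need for the general-$l$ commutant expansion. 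Your version solves the $2\times 2$ or $3\times 3$ linear system at general $l$ and verifies the linear relation by matching coefficients in the basis $\{I_m\otimes I_m,F,\Theta\}$; this is more computational but produces the explicit general-$l$ coefficients as a by-product. Both are valid; the paper's elimination trick is the shorter path to the displayed identity, while yours yields strictly more information and makes the real/complex dichotomy (the extra Brauer-algebra generator $\Theta$) completely transparent.
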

 \begin{proof}
 In the special case $l=1$, by the normalization $\TR( K_{2,1,m}) = 1$ and the invariance properties of $K_{2,1,m}$
 under the tensor representation of the orthogonal or unitary group \cite[Ch.\ 7-9]{Ma2007},
we obtain 
 $$
    K_{2,1,m} = a \sum_{j=1}^m E_{j,j} \otimes E_{j,j} + \sum_{\substack{j,j'=1\\ j \ne j'}}^m \PARENTH{ b (E_{j,j'} \otimes E_{j',j} + E_{j,j} \otimes E_{j',j'}) + c E_{j,j'}  \otimes E_{j,j'}}.
 $$   
 %where we abbreviate $E_{j,j'} = e_j \otimes e_{j'}^*$ and 
% $$
%     a = \frac{3}{m(m+2)}, b = c = \frac{1}{m(m+2)} \mbox { if } \BB{F} = \BB R
% $$ 
% or
% $$
%   a = \frac{2}{m(m+1)}, b= \frac{1}{m(m+1)}, c=0 \mbox{ if } \BB{F} = \BB{C} \, . 
% $$
 
 Next, we use the symmetrization identity \cite{Zauner1999,RoyScott2007}
 \begin{align*}
   I_m \otimes I_m & = \sum_{j,j'=1}^m \int_{\CAL U} U E_{j,j} U^* \otimes U E_{j',j'} U^* d\mu(U) \\
      & = m K_{2,1,m} + m(m-1) \int_{\CAL U} U E_{1,1} U^* \otimes U E_{2,2} U^* d\mu(U) \, .
 \end{align*}
 Repeating this type of expansion for $K_{2,l,m}$ results in
 \begin{align*}
    K_{2,l,m} & = \sum_{j,j'=1}^l \int_{\CAL U} U E_{j,j} U^* \otimes U E_{j',j'} U^* d\mu(U)\\
    & = l K_{2,1,m} + l(l-1) \int_{\CAL U} U E_{1,1} U^* \otimes U E_{2,2} U^* d\mu(U) \\
    & =  l K_{2,1,m} + \frac{l(l-1)}{m(m-1)} ( I_m \otimes I_m - m K_{2,1,m} ) \, .
 \end{align*}
Rearranging terms gives the claimed expression for $K_{2,l,m}$.
 \end{proof}
 
% Computing the squared Hilbert-Schmidt norm of the $2$-coherence tensor for the case $l=1$ in the canonical basis gives 
% $$
%    \TR( K_{2,1,m}^2 ) = m a^2 + m (m-1) (2 b^2 + c^2) 
% $$
% which simplifies in the real or complex case to 
% $
%   \TR( K_{2,1,m}^2 ) =  a \, . 
% $
 %
 %$\Pi$ transposes the components of the tensor product,
 %$\Pi = \sum_{i,j=1}^m e_i \otimes e_j^* \otimes e_j \otimes e_i^*$.
 %
% projects onto the symmetric subspace of $\BB{F}^m \otimes \BB{F}^m$.
% Because $\{U^{\otimes 2}: U \in \CAL{U}\}$ acts irreducibly on the symmetric or antisymmetric 
% subspace of $\BB{F}^m \otimes \BB{F}^m$, we have that
% $$
%    K_2 = a I_m \otimes I_m + b \Pi
% $$
% where 
% The constant can be determined by $\TR[ K_2 ] = l^2$, so $a=l^2/m^2$
 %
Next, we use the linear relation between the eigenvalues of $K_{2,l,m}$ and $K_{2,1,m}$ to obtain as
 a corollary the expression for the squared Hilbert-Schmidt norm of the $2$-coherence tensor for general $l \in \mathbb N$,
 %As a corollary to the lemma
 %Zauner also computes the value also for  
 which had been computed by Zauner without determining $K_{2,l,m}$ explicitly \cite[Lemma 2.7]{Zauner1999}.
 \begin{cor}\label{cor_K2lm_val}
Given positive integers $l$ and $m$ with $l \leq m$, the squared Hilbert-Schmidt norm of the $2$-coherence tensor is
 $$\displaystyle
    \TR(K_{2,l,m}^2) = \frac{l^4}{m^2} + \frac{l^2(m-l)^2}{\DF m^2}.
    %
     %\begin{cases} \frac{l^2((m+1)l^2+2m - 4 l)}{m(m-1)(m+2)} \, ,& \BB{F} = \BB{R}\\[0.2em]
      %                                           \frac{l^2(ml^2+m-2l)}{m(m-1)(m+1)} \, ,& \BB{F} = \BB{C} 
      %                                           \end{cases} \, . 
 $$
 \end{cor}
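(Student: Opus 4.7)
The plan is to build $\TR(K_{2,l,m}^2)$ from the pieces that Proposition~\ref{prop_K2lm} already supplies. That proposition tells us that $K_{2,l,m}$ is an affine combination of $K_{2,1,m}$ and $I_m \otimes I_m$: writing
$$
K_{2,l,m} = \alpha\, K_{2,1,m} + \beta\, I_m \otimes I_m, \qquad \alpha = \frac{l(m-l)}{m-1}, \qquad \beta = \frac{l(l-1)}{m(m-1)},
$$
and expanding by bilinearity of the Hilbert--Schmidt inner product yields
$$
\TR(K_{2,l,m}^2) = \alpha^2 \TR(K_{2,1,m}^2) + 2\alpha\beta \TR(K_{2,1,m}) + \beta^2 \TR\bigl((I_m \otimes I_m)^2\bigr).
$$
The last two traces are immediate: $\TR((I_m \otimes I_m)^2) = m^2$, and $\TR(K_{2,1,m}) = \int_{\CAL U} \TR(UPU^*)^2 \, d\mu(U) = 1$ for the rank-one projection $P$. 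So everything reduces to computing $\TR(K_{2,1,m}^2)$.

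For that, I would expand $K_{2,1,m}$ as $aS_1 + bS_2 + bS_3 + cS_4$ where $S_1, S_2, S_3, S_4$ are the four sums appearing in Proposition~\ref{prop_K2lm}, and compute the pairwise Hilbert--Schmidt inner products using $\TR(E_{j,k}E_{p,q}) = \delta_{k,p}\delta_{j,q}$. The key observation is that almost all cross terms vanish: $S_1, S_2, S_3, S_4$ are pairwise orthogonal under $\TR(\cdot\, \cdot)$ because matching the delta-constraints from the two factors forces $j = j'$ in any would-be nonzero contribution to $\TR(S_i S_j)$ for $i\ne j$. The self-inner-products are straightforward counts, yielding $\TR(S_1^2) = m$ and $\TR(S_i^2) = m(m-1)$ for $i=2,3,4$. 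Hence
$$
\TR(K_{2,1,m}^2) = ma^2 + 2m(m-1)b^2 + m(m-1)c^2.
$$
Substituting the real values $b = c = a/3$ with $a = 3/(m(m+2))$ and the complex values $b = a/2$, $c = 0$ with $a = 2/(m(m+1))$, both cases collapse to the unified expression
$$
\TR(K_{2,1,m}^2) = \frac{1}{m^2} + \frac{(m-1)^2}{\DF\, m^2},
$$
because $\DF + (m-1)^2$ simplifies to $3m(m-1)/2$ (real) or $2m(m-1)$ (complex), matching the denominators that appear in $a$.

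Finally, I would plug this back into the expansion. The $\DF m^2$-denominator term contributes $\alpha^2 (m-1)^2/(\DF m^2) = l^2(m-l)^2/(\DF m^2)$ directly from $\alpha(m-1) = l(m-l)$. The remaining contribution is
$$
\frac{\alpha^2}{m^2} + 2\alpha\beta + \beta^2 m^2 = \left(\frac{\alpha}{m} + \beta m\right)^{\!2},
$$
and a short manipulation shows $\alpha/m + \beta m = l^2/m$, so this piece equals $l^4/m^2$. Combining the two pieces yields the claimed identity. The main obstacle is really the bookkeeping in the orthogonality check among $S_1, S_2, S_3, S_4$ (making sure the real-versus-complex asymmetry in $c$ is absorbed correctly by $\DF$), together with recognizing the perfect-square collapse $(\alpha/m + \beta m)^2 = l^4/m^2$; without that last identity, the computation looks messier than the final answer suggests.
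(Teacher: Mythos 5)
Your proposal is correct and follows essentially the same route as the paper: expand $\TR(K_{2,l,m}^2)$ bilinearly via the affine relation $K_{2,l,m}=\alpha K_{2,1,m}+\beta\, I_m\otimes I_m$ from Proposition~\ref{prop_K2lm}, evaluate $\TR(K_{2,1,m}^2)$ from the explicit $l=1$ formula, and simplify. You merely supply the details (the pairwise orthogonality of the four sums and the perfect-square collapse $(\alpha/m+\beta m)^2=l^4/m^2$) that the paper compresses into ``a straightforward computation'' and ``simplifying this expression.''
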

 \begin{proof}
Using Proposition~\ref{prop_K2lm}, a straightforward computation for the case $l=1$ shows that, for both the real and complex case, we have
 $$
    \TR( K_{2,1,m}^2 ) = \frac{\DF+ (m-1)^2}{m^2 \DF}.
 $$

 Next, the identity relating $K_{2,l,m}$ and $K_{2,1,m}$  from Proposition~\ref{prop_K2lm} gives
 $$
    \TR( K_{2,l,m}^2 ) = \PARENTH{l - \frac{l(l-1)}{m-1}}^2 \TR( K_{2,1,m}^2 ) + 2 \PARENTH{l - \frac{l(l-1)}{m-1}} \frac{l(l-1)}{m(m-1)} + \frac{l^2 (l-1)^2}{(m-1)^2} \, .
 $$
 Inserting the value of the squared Hilbert-Schmidt norm for $K_{2,1,m}$ and simplifying this expression gives
 $$
    \TR( K_{2,l,m}^2 ) = \frac{1}{\DF m^2 (m-1)} \PARENTH{\DF l^4(m-1) + (m-1) l^2 (m-l)^2} = \frac{l^4}{m^2} + \frac{l^2(m-l)^2}{\DF m^2} \, .
 $$
 \end{proof}

Next, we verify that maximal orthoplectic fusion frames are  Grassmannian $2$-designs.

%We combine the design properties of $(\ZM, \BLOCKSET_t)$ with the fact that the rank-one orthogonal projections corresponding to the vectors in a maximal set of mutually unbiased bases are Grassmannian 2-designs to establish that the same property holds for the fusion frames constructed in Theorem~\ref{th_main}.  In order to use this fact, we also express this property in terms of the vectors of the mutually unbiased bases.

%Finally, we show that the fusion frames constructed in Theorem~\ref{th_main} are tight $2$-fusion frames.

\begin{thm} Given a maximal orthoplectic $(2\DF,m/2,m)$-fusion frame $\CAL{F} = \{P_j \}_{j=1}^{2\DF}$, then equality holds in the generalized Sidelnikov inequality 
in Theorem~\ref{thm:ZaunerSidelnikov}
and
the fusion frame is a Grassmannian 2-design.
\end{thm}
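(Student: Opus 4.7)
The plan is to verify the equality condition in Zauner's generalized Sidelnikov inequality (Theorem~\ref{thm:ZaunerSidelnikov}) for $t=2$, from which the Grassmannian 2-design property follows immediately. By Corollary~\ref{cor:half_ss} we already know $l=m/2$, so every ingredient can be written in terms of $m$ and $\DF$ alone.

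First, I would use the rigidity statement in Corollary~\ref{cor_2df} to write down the multiset of pairwise traces. For $n = 2\DF$ and $l = m/2$, the ``paired'' value $\frac{2(2l-m)}{m}$ collapses to $0$, so for each index $j$ there is exactly one companion $k_j$ with $\TR(P_j P_{k_j}) = 0$, while the remaining $n-2$ indices $k$ yield $\TR(P_j P_k) = l^2/m = m/4$; of course $\TR(P_j P_j) = l = m/2$. Summing over all ordered pairs therefore gives
\begin{equation*}
\sum_{i,j=1}^n (\TR(P_i P_j))^2 = n\cdot l^2 + n \cdot 0 + n(n-2)\cdot \frac{l^4}{m^2} = 2\DF\cdot \frac{m^2}{4} + 2\DF(2\DF-2)\cdot \frac{m^2}{16}.
\end{equation*}
Dividing by $n^2 = 4\DF^2$ and simplifying yields $\frac{1}{n^2}\sum_{i,j}(\TR(P_iP_j))^2 = \frac{m^2(\DF+1)}{16\DF}$.

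Next, I would evaluate the right-hand side of the Sidelnikov bound using Corollary~\ref{cor_K2lm_val} with $l=m/2$:
\begin{equation*}
\TR(K_{2,l,m}^2) = \frac{l^4}{m^2} + \frac{l^2(m-l)^2}{\DF m^2} = \frac{m^2}{16} + \frac{m^2}{16\DF} = \frac{m^2(\DF+1)}{16\DF}.
\end{equation*}
The two sides agree, so equality holds in the generalized Sidelnikov inequality, and the equality clause of Theorem~\ref{thm:ZaunerSidelnikov} then identifies $\CAL F$ as a Grassmannian 2-design.

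I do not anticipate a serious obstacle: the entire argument is a bookkeeping exercise matching two explicit rational expressions. The only subtlety is making sure that Corollary~\ref{cor_2df} is applied correctly, namely noting that in the maximal orthoplectic regime with $l=m/2$ the two permitted off-diagonal trace values are exactly $0$ and $l^2/m$, and that each projection has a unique orthogonal partner. Once this combinatorial structure is in hand, the computation is routine and the conclusion follows from Zauner's characterization of equality.
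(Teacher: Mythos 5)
Your proposal is correct and follows essentially the same route as the paper: both invoke the rigidity clause of Corollary~\ref{cor_2df} to enumerate the multiset of pairwise traces (one orthogonal partner per projection, the rest equal to $l^2/m$), compute the average squared trace, and match it against $\TR(K_{2,m/2,m}^2)$ from Corollary~\ref{cor_K2lm_val} to conclude equality in Theorem~\ref{thm:ZaunerSidelnikov}. Your write-up is, if anything, slightly more careful, since the paper's displayed sum omits the square on the (vanishing) cross term.
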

\begin{proof}
We first compute the value of the lower bound from Theorem~\ref{thm:ZaunerSidelnikov},
$$
   \TR ( K_{2,m/2,m}^2 ) = \frac{m^2(\DF + 1)}{16 \DF} \, .
$$ 
Next, we use Corollary~\ref{cor_2df} to compute the average squared inner product for the projection matrices,
$$
   \frac{1}{4\DF^2} \sum_{j,j'=1}^{2\DF} (\TR( P_j P_{j'} ) )^2 = \frac{1}{2\DF} \PARENTH{ l^2 + \frac{l(2l-m)}{m} + (2\DF -2) \frac{l^4}{m^2} } = \TR ( K_{2,m/2,m}^2 ),
   %= \frac{\KF}{4} m^2(m-1) \left(\KF(m-1)+1\right)
$$
where the last equality follows by $l=m/2$. As stated in Theorem~\ref{thm:ZaunerSidelnikov}, this characterizes Grassmannian 2-designs.
\end{proof}

\section{Mutually unbiased bases and fusion frames}\label{sec:muff}

%Our construction of maximal orthoplectic fusion frames builds on maximal sets of mutually unbiased bases. The rank-one projections associated with the basis vectors can be understood as a special case
%of an orthoplex-bound achieving fusion frame.

\begin{defn}
If $\CAL{B}=\{b_j\}_{j=1}^m$ and $\CAL{B'}=\{b'_j\}_{j=1}^m$ are a pair of orthonormal bases for $\BB{F}^m$, then they are {\bf mutually unbiased} if
$$
|\langle b_j, b'_{j'} \rangle|^2 = \frac{1}{m}
$$
for every $j,j' \in\ZM$. A collection of orthonormal bases $\{\CAL{B}_k\}_{k \in K}$ is called a set of {\bf mutually unbiased bases} if the pair $\CAL{B}_k$ and $\CAL{B}_{k'}$
is mutually unbiased for every $k \ne k'$.
\end{defn}

The number of mutually unbiased bases is bounded in terms of the dimension $m$.

\begin{thm}[Delsarte, Goethals and Seidel \cite{DelsarteGoethalsSeidel1975}] \label{th_mub_bd}\label{thm:DGS}
Let $\BB{F}^m$ be a real or complex Hilbert space and let
$\{\CAL{B}_k \}_{k \in K}$ be a set of mutually unbiased bases for $\BB{F}^m$, where $\CAL{B}_k = \CBRACK {b_j^{(k)} }$ for each $k \in K$, and let $r=|K|$.  If $\BB{F} = \BB R$, then $r \leq m/2+1$
and if $\BB F = \BB C$, then $r \le m+1$.
 If equality is achieved in either case, then the real span of 
the corresponding projection operators,  $ \CBRACK{b_j^{(k)} \otimes \PARENTH{b_j^{(k)}}^* : j\in\ZM, k \in K}$, 
is that of
all symmetric or Hermitian operators on $\BB{F}^m$, respectively.
\end{thm}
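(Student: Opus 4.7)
The plan is to use the embedding of Theorem~\ref{thm:embed} (in the $l=1$ case) to turn the $rm$ rank-one projections $P_j^{(k)} = b_j^{(k)}\otimes(b_j^{(k)})^*$ into unit vectors inside the real Euclidean space $\CAL{S}$ of traceless symmetric (resp.\ Hermitian) $m\times m$ matrices, which has real dimension $\DF$. The mutual unbiasedness will force the vectors from different bases to be mutually orthogonal, so that the span splits as an orthogonal direct sum of one $(m-1)$-dimensional ``simplex'' subspace per basis, and a simple dimension count will yield the stated bound.

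First I would record the inner products. By Theorem~\ref{thm:embed}, the vectors $V_j^{(k)} = \sqrt{m/(m-1)}\bigl(P_j^{(k)} - \tfrac{1}{m}I_m\bigr)$ are unit vectors in $\CAL{S}$ satisfying
$$
\TR\!\left(V_j^{(k)} V_{j'}^{(k')}\right) = \frac{m}{m-1}\PARENTH{\TR\!\left(P_j^{(k)} P_{j'}^{(k')}\right) - \frac{1}{m}}.
$$
Within a single basis ($k=k'$, $j\ne j'$) this evaluates to $-1/(m-1)$, the simplex value, while for two distinct bases ($k\ne k'$) mutual unbiasedness gives $\TR(P_j^{(k)}P_{j'}^{(k')}) = 1/m$, so the inner product vanishes exactly.

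Next I would exploit the orthonormal basis relation $\sum_{j=1}^m P_j^{(k)} = I_m$, which yields $\sum_{j=1}^m V_j^{(k)} = 0$, so $W_k := \SPAN_{\BB R}\{V_j^{(k)}\}_{j=1}^m$ has dimension at most $m-1$; combined with the regular-simplex inner products within a basis, $\dim_{\BB R} W_k = m-1$. The orthogonality between different bases established above forces $W_k \perp W_{k'}$ for $k\ne k'$, and hence the sum $\bigoplus_{k\in K} W_k \subseteq \CAL{S}$ is an internal orthogonal direct sum of real dimension $r(m-1)$. Comparing with $\dim_{\BB R}\CAL{S} = \DF$ gives $r(m-1)\le \DF$, which, after substituting $\DF=(m+2)(m-1)/2$ in the real case and $\DF = (m+1)(m-1)$ in the complex case, yields $r\le m/2+1$ and $r\le m+1$, respectively.

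Finally, for the equality assertion I would argue that $r(m-1)=\DF$ forces $\bigoplus_{k\in K} W_k = \CAL{S}$, so the traceless parts $P_j^{(k)} - \tfrac{1}{m}I_m$ span all of $\CAL{S}$ over $\BB R$; adjoining the identity — which already lies in the real span of the $P_j^{(k)}$ since $\sum_{j=1}^m P_j^{(k)} = I_m$ — shows that the real span of $\{b_j^{(k)}\otimes (b_j^{(k)})^*\}$ coincides with all symmetric (resp.\ Hermitian) operators on $\BB F^m$. The only real subtlety to watch is that in the complex case one must treat $\CAL{S}$ as a real vector space of dimension $m^2-1$ with the Hilbert-Schmidt inner product $\TR(A^*B)$ (which is real-valued on Hermitian matrices); once that is fixed, everything in the argument is $\BB R$-linear, and no additional obstacle arises.
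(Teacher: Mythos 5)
Your proof is correct. It computes the same quantity as the paper's proof --- the real dimension of the span of the $rm$ rank-one projections, namely $r(m-1)+1$ --- but by a different decomposition. The paper works directly with the Gram matrix of the projections, which after ordering takes the form $I_{r}\otimes I_m+ \frac{1}{m}(J_{r}-I_{r})\otimes J_m$, and computes its rank by identifying the $(r-1)$-dimensional kernel; the bound then follows by comparing $rm-r+1$ with the dimension of the symmetric/Hermitian matrices. You instead pass through the traceless embedding of Theorem~\ref{thm:embed} with $l=1$, observe that each basis contributes a regular simplex spanning an $(m-1)$-dimensional subspace $W_k$ of the traceless matrices, and that mutual unbiasedness makes these subspaces pairwise orthogonal, so $r(m-1)\le \DF$. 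The two computations are equivalent (your orthogonal direct sum $\bigoplus_k W_k$ together with $\BB{R}I_m$ is exactly the column space the paper reads off from the Gram matrix), but your route reuses machinery the paper has already set up and makes the geometric picture --- orthogonal simplices sitting inside the sphere of traceless matrices --- explicit, which meshes nicely with the orthoplex-bound discussion elsewhere in the paper; the paper's version is a self-contained rank computation that does not need the normalization constants of the embedding. Your handling of the equality case (adjoining $I_m=\sum_j P_j^{(k)}$ to recover the full span of symmetric/Hermitian operators from the traceless part) is also correct and matches what the paper's rank argument gives implicitly.
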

\begin{proof}
After selecting an appropriate ordering on the vectors, 
the Gram matrix  of the corresponding rank-one projection operators
 has the form
$G=I_{r}\otimes I_m+ \frac{1}{m}(J_{r}-I_{r})\otimes J_m,$ where 
$I_{r}$ and $I_m$ are the $r \times r$ and $m\times m$ identity matrices, respectively, and 
$J_{r}$ and $J_m$ denote the $r\times r$ and $m\times m$ matrices containing only $1$'s.
The kernel of the Gram matrix is the space of vectors $a\otimes b$ such that
$J_m b= m b$ and $J_{r} a=0$, so it is $\PARENTH{r-1}$-dimensional. Consequently, the rank of $G$
and thus the real dimension of the span of the rank one projections  is $rm-r+1$.
This shows the claimed bound on $r$ and that the maximal rank is achieved if and only if
$r =m+1$ in the complex case or
$r=\frac{m}{2}+1$ in the real case, because the rank of the Gram matrix equals the dimension of the span of the underlying projections.
\end{proof}

For the real case, it is known that, for most values of $m$, the maximal number of mutually unbiased bases is less than or equal to $3$ \cite{LeCompteMartinOwens2010}; however,
if $m$ is a power of $4$, then examples exist that achieve the bound in Theorem~\ref{th_mub_bd} \cite{CameronSeidel1973}. 
In the complex case, the bound is achieved if $m$ is a prime power \cite{WoottersFields1989}.

\begin{thm}[Cameron and Seidel \cite{CameronSeidel1973}, Wootters and Fields \cite{WoottersFields1989}]\label{th_prime_mubs_exist}
If $m$ is a prime power, then a family of $m+1$ mutually unbiased bases for $\BB{C}^m$ exists.
If $m$ is a power of 4, then a family of $m/2+1$ mutually unbiased bases exist for $\BB{R}^m$.
\end{thm}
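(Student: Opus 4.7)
The plan is to exhibit the bases explicitly, using the arithmetic of $\mathbb{F}_q$ in the complex case and a Kerdock/Hadamard-style construction in the real case.

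For the complex case with $m = q = p^k$, I would index the canonical basis of $\mathbb{C}^q$ by $\mathbb{F}_q$, take the standard basis $\{e_x\}_{x \in \mathbb{F}_q}$ as one basis, and for each $a \in \mathbb{F}_q$ define
$$
b^{(a)}_j = \frac{1}{\sqrt{q}}\sum_{x \in \mathbb{F}_q} \chi\bigl(a x^2 + j x\bigr)\, e_x, \qquad j \in \mathbb{F}_q,
$$
with $\chi(y) = e^{2\pi i\,\mathrm{tr}(y)/p}$ and $\mathrm{tr} \colon \mathbb{F}_q \to \mathbb{F}_p$ the field trace; when $p=2$ one replaces $a x^2$ by a $\mathbb{Z}/4$-valued lift of an $\mathbb{F}_2$-bilinear form, since in characteristic two the map $x \mapsto x^2$ is already linear and the naive quadratic phase degenerates. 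Orthonormality of each basis $\mathcal{B}_a = \{b^{(a)}_j\}_j$ follows from character orthogonality in the linear variable, and mutual unbiasedness of $\mathcal{B}_a$ and $\mathcal{B}_{a'}$ with $a \ne a'$ reduces, after completing the square in $x$, to the classical evaluation
$$
\bigl|\langle b^{(a)}_j, b^{(a')}_{j'}\rangle\bigr|^2 = \frac{1}{q^2}\Bigl|\sum_{x \in \mathbb{F}_q}\chi\bigl((a'-a)x^2 + (j'-j)x\bigr)\Bigr|^2 = \frac{1}{q},
$$
since the inner sum is a nondegenerate quadratic Gauss sum over $\mathbb{F}_q$ of modulus $\sqrt{q}$; unbiasedness against the standard basis is immediate from $|(b^{(a)}_j)_x| = 1/\sqrt{q}$. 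Altogether this produces the required $q+1$ bases.

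For the real case with $m = 4^k$, I would follow the Cameron-Seidel route. The goal is to produce $m/2$ real orthogonal matrices $H_1, \ldots, H_{m/2} \in O(m)$ such that each $\sqrt{m}\, H_i$ has entries in $\{\pm 1\}$ (i.e.\ is a real Hadamard matrix) and each pairwise product $\sqrt{m}\, H_i^\top H_j$ with $i \ne j$ is likewise Hadamard; the columns of $I_m, H_1, \ldots, H_{m/2}$ then form $m/2+1$ real MUBs. Such a system can be built from a Kerdock set of $m/2$ binary quadratic forms on $\mathbb{F}_2^{2k}$ whose pairwise differences are nondegenerate (equivalently, from the action of a suitable real extraspecial $2$-group on $\mathbb{R}^{4^k}$): lifting the forms to $\mathbb{Z}/4$ and plugging them into a Cayley-type exponential recipe yields the required matrices, with the Hadamard compatibility translating directly from the nondegeneracy of the pairwise differences.

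The main obstacle differs between the two cases. In the complex case, the only delicate point is characteristic $2$, where literal quadratic phases collapse and one has to work with $\mathbb{Z}/4$-valued lifts and a mildly twisted Gauss-sum identity. In the real case, the substantive difficulty is producing a Kerdock set of the maximal size $m/2$, which is what forces $m$ to be a fourth power: the real representation theory of the relevant extraspecial $2$-group (equivalently, the combinatorics of binary Kerdock codes) admits the maximum allowed by the Delsarte-Goethals-Seidel bound of Theorem \ref{th_mub_bd} precisely when $m = 4^k$.
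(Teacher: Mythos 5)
The paper does not prove this theorem at all: it is imported by citation from Cameron--Seidel and Wootters--Fields, so there is no internal proof to compare against. Your outline reconstructs exactly the standard constructions from those cited sources, and it is correct as far as it goes: the quadratic-phase bases $b^{(a)}_j=q^{-1/2}\sum_x\chi(ax^2+jx)e_x$ together with the standard basis do give $q+1$ mutually unbiased bases for odd prime powers $q$, with unbiasedness reducing to the modulus-$\sqrt q$ evaluation of nondegenerate quadratic Gauss sums, and the real case does follow from a maximal Kerdock set of $m/2$ alternating forms on $\mathbb{F}_2^{2k}$ with pairwise nonsingular differences, whose associated $\pm1$ Hadamard matrices together with $I_m$ yield $m/2+1$ real MUBs. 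The caveat is that the two genuinely substantive ingredients are asserted rather than proved: (i) the characteristic-two complex construction via $\mathbb{Z}/4$-valued (Galois ring) lifts and the accompanying twisted Gauss-sum estimate, and (ii) the existence of Kerdock sets of the maximal size $2^{2k-1}$ for every $k$. Point (i) is not a side case here --- the paper's Section~4 uses exclusively $m=2^r$, so the characteristic-two construction is the one actually needed --- and your sketch explicitly defers it. Two minor corrections: say ``power of four'' rather than ``fourth power'' for the real case (you clearly mean $m=4^k=2^{2k}$); and for the real construction the plain binary recipe $x\mapsto(-1)^{Q(x)+b\cdot x}$ with $Q$ a binary quadratic form polarizing to a form in the Kerdock set already suffices, since a quadratic form on $\mathbb{F}_2^{2k}$ with nonsingular polarization has character sum of modulus $2^k=\sqrt m$; the $\mathbb{Z}/4$ lift is only forced in the complex characteristic-two case. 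As a proof outline your proposal is sound and matches the route of the cited literature, but it is a roadmap to the known proofs rather than a self-contained argument.
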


Henceforth, we abbreviate $k_{\mathbb R}(m) = m/2+1$ and $k_{\mathbb C}(m) = m+1$.
The rank one orthogonal projections corresponding to maximal sets of mutually unbiased bases give rise to  Grassmannian 2-designs.
 
 \begin{prop}\label{lem_MUBs_2tight}
If $\{\CAL{B}_k\}_{k \in K}$ is a set of mutually unbiased bases for $\BB{F}^m$ with $|K| = \KF$, then 
the family of rank-one projections
$\CAL{F} =  \CBRACK{b_j^{(k)} \otimes \PARENTH{b_j^{(k)}}^* : k \in K, j\in\ZM}$ forms a 
Grassmannian 2-design.
%ight $2$-fusion frame.  More precisely, 
%for any $x \in \BB{F}^m$,
%$$
%   \sum_{k\in K} \sum_{j\in \ZM} \left| \langle x, b_j^{(k)} \rangle \right|^4 = \frac{\KF + m -1}{m} \|x\|^4 \, .
%$$
\end{prop}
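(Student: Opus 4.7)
The plan is to invoke the generalized Sidelnikov inequality (Theorem~\ref{thm:ZaunerSidelnikov}) in the form that characterizes Grassmannian $t$-designs as those $(n,l,m)$-fusion frames for which $\frac{1}{n^2}\sum_{i,j}(\TR(P_iP_j))^t$ equals the lower bound $\TR(K_{t,l,m}^2)$. Specializing to $t=2$ and to the rank-one case $l=1$, I need only compare two explicit numerical quantities: the average of squared pairwise Hilbert-Schmidt inner products among the MUB projections, and the value of $\TR(K_{2,1,m}^2)$ that is given by Corollary~\ref{cor_K2lm_val}.

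First, I would record the structure of the pairwise inner products among the $n = m\KF$ rank-one projections. For any $k \in K$ and $j \ne j'$ in $\ZM$, orthonormality of $\CAL{B}_k$ gives $\TR(P_j^{(k)} P_{j'}^{(k)}) = 0$, and mutual unbiasedness gives $\TR(P_j^{(k)} P_{j'}^{(k')}) = 1/m$ whenever $k \ne k'$. The diagonal terms contribute $\TR(P_j^{(k)} P_j^{(k)}) = 1$. Partitioning the $n^2$ ordered pairs accordingly (the $n$ diagonal pairs; the $\KF m(m-1)$ off-diagonal same-basis pairs; and the remaining $m^2 \KF(\KF-1)$ cross-basis pairs), the double sum collapses to
$$
   \frac{1}{n^2}\sum_{i,j=1}^n (\TR(P_i P_j))^2 = \frac{1}{m^2 \KF^2}\bigl( m\KF + \KF(\KF-1)\bigr) = \frac{m + \KF - 1}{m^2 \KF}.
$$

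Next, I would substitute $l=1$ in Corollary~\ref{cor_K2lm_val} to obtain $\TR(K_{2,1,m}^2) = (\DF + (m-1)^2)/(m^2 \DF)$, and verify in both cases that this matches the average computed above. In the complex case, $\KF = m+1$ and $\DF = m^2-1$, and both expressions reduce to $2/(m(m+1))$. In the real case, $\KF = m/2+1$ and $\DF = (m+2)(m-1)/2$, and both reduce to $3/(m(m+2))$. Equality in the Sidelnikov inequality then forces $\CAL F$ to be a Grassmannian $2$-design.

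There is essentially no obstacle here beyond this bookkeeping: the inner-product structure of MUBs is exactly rigid enough to saturate the Sidelnikov bound, and the matching of the two closed-form expressions is a direct algebraic simplification. The only point that warrants a brief comment is that $\CAL{F}$ is indeed a legitimate $(n,1,m)$-fusion frame: since $\sum_{j=1}^m b_j^{(k)}\otimes (b_j^{(k)})^* = I_m$ for each $k$, the full family sums to $\KF\, I_m$, so the tight fusion frame hypothesis needed to apply Theorem~\ref{thm:ZaunerSidelnikov} holds automatically.
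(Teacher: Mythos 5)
Your proposal is correct and follows essentially the same route as the paper: both count the pairwise squared Hilbert--Schmidt inner products ($1$ once, $0$ for same-basis distinct pairs, $1/m^2$ for cross-basis pairs) to get the average $(\KF+m-1)/(m^2\KF)$, and then match it to $\TR(K_{2,1,m}^2)$ via the identity $\DF=(m-1)\KF$ to saturate the Sidelnikov bound of Theorem~\ref{thm:ZaunerSidelnikov}. The arithmetic in both the real and complex cases checks out.
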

\begin{proof}
% the rank of $G$ is $rm-r+1$, and%
We only need to compare both sides of the inequality from Theorem~\ref{thm:ZaunerSidelnikov}.
To evaluate the left-hand side, we observe that the Hilbert-Schmidt inner product
is expressed in terms of the basis vectors as $\PARENTH{\TR \PARENTH{ P_j^{(k)} P_{j'}^{(k')} }}^2 = \left| \langle b_j^{(k)}, b_{j'}^{(k')}\rangle \right|^4$. 
%
%of all projections amounts to computing a squared Hilbert-Schmidt norm of a doubly-indexed  matrix $G$
%whose entries are $G_{j,k;j',k'} = |\langle b_j^k, b_{j'}^{(k')}\rangle |^2 $. 
%%We compute
%%$$
%%   \frac{1}{m^2 \KF^2} \sum_{j,j'=1}^m \sum_{k,k'=1}^{\KF} |\langle b_j^{(k)} , b_{j'}^{(k')} \rangle |^4
%%   = 
%%$$
%This matrix is the Gram matrix of the family of projections.
%Since $|K|= \KF$, Theorem~\ref{th_mub_bd} shows that $\CAL{F}$ forms a frame for the 
%real vector space of symmetric/Hermitian $m \times m$ matrices over $\BB{F}^m$, which has dimension $\DF +1$.  Accordingly, its Gram matrix $G$ has $\DF+1$ nonzero eigenvalues.
%After an appropriate ordering of the frame vectors, $G$ has the form
%$G=I_{\KF} \otimes I_m+ \frac{1}{m}(J_{\KF} -I_{\KF})\otimes J_m$,
% where 
%$I_{\KF}$ and $I_m$ are the $\KF\times \KF$ and $m\times m$ identity matrices, respectively, and 
%$J_{\KF}$ and $J_m$ denote the $\KF\times \KF$ and $m\times m$ matrices containing only $1$'s.  
%The non-zero eigenvalues of $G$ are $\KF$ with multiplicity one and
%$1$ with multiplicity $\KF(m-1)$. The eigenspace corresponding to eigenvalue $\KF$ 
%consists of the symmetric/Hermitian matrices proportional to the identity matrix
%and the eigenspace corresponding to eigenvalue $1$ is that of all symmetric/Hermitian
%matrices with vanishing trace.
Given any fixed basis vector $b_{j}^{(k)}$, the fourth power of the absolute value of its inner product with the other vectors in the set  are $0$, which occurs $m-1$ times, $1/m^2$, which occurs $(\KF-1)m$ times, and $1$, which occurs once. 
Averaging these gives
$$
  \frac{1}{m^2 \KF^2}  \sum_{j,j'=1}^m \sum_{k,k'=1}^{\KF}  \PARENTH{\TR\PARENTH{P^{(k)}_j P^{(k')}_{j'}}}^2 = \frac{\KF+ m-1}{m^2 \KF} \, .
$$
Comparing with the value of $\TR( K_{2,l,m}^2 )$ in the special case $l=1$ and using $\DF = (m-1)\KF $ shows that
equality holds in the inequality in Theorem~\ref{thm:ZaunerSidelnikov}.
%Since the non-zero eigenvalues of the Gram matrix are identical to that of the frame
%operator,  the decomposition 
%$
%  x \otimes x^* = \PARENTH{x \otimes x^* - \frac{\|x\|^2}{m} I_m} +  \frac{\|x\|^2}{m} I_m
%$
%gives us
%\begin{align*}
%    \sum_{\tiny \begin{array}{cc} j \in \ZM \\ k \in K  \end{array}} \left|\TR\PARENTH{ x \otimes  x^* b_j^{(k)} \otimes \PARENTH{b_{j}^{(k)}}^* } \right|^2
% &=
%     \frac{\KF \|x\|^4}{m^2} \TR\, I_m + \TR \PARENTH{ \PARENTH{ x x^* -  \frac{\|x\|^2}{m} I_m}^2 } \\
%&=
% \frac{\KF}{m} \|x\|^4 + \|x\|^4 - \frac{2}{m} \|x\|^4 + \frac{\|x\|^4}{m} \, .
%\end{align*}
%Collecting terms shows that $\CAL{F}$ is a tight $2$-fusion frame and gives the claimed expression.
 \end{proof}

The version of the orthoplex bound for projections motivates the notion of mutual unbiasedness for fusion frames.

\begin{defn}
If $\CAL{F}=\{P_j\}_{j=1}^n$ is an  $(n,l,m)$-fusion frame and $\CAL{F'}=\{P'_j\}_{j=1}^{n'}$ is an $(n',l,m)$-fusion frame, then $\CAL F$ and $\CAL F'$ are {\bf mutually unbiased} if
$$
\TR(P_j P'_{j'}) = \frac{l^2}{m}
$$
for every $j \in\ZN$ and  $j'\in\ZNprime$. 
 A collection of fusion frames 
$\CBRACK{\CAL F_k}_{k \in K}$ for
 $\BB F^m$, where each $\CAL F_k$ consists of projections onto $l$-dimensional subspaces, is a set of 
{\bf mutually unbiased fusion frames} if the pair $\CAL F_k$ and $\CAL F_{k'}$ is mutually unbiased for every $k \neq k'$. 
\end{defn}

Given a subset of a fixed orthonormal basis, the orthogonal projection onto the span is given by the sum of the corresponding rank one projections.  Projections formed in this way are called {\em coordinate projections.}

\begin{defn}
Given an orthonormal basis $\CAL{B} = \{b_j\}_{j=1}^m$ for $\BB{F}^m$ and a subset $\CAL J \subset \ZM$, the {\bf $\CAL J$-coordinate projection} with respect to $\CAL{B}$ is
$$P_{\CAL J} =\sum\limits_{j \in \CAL J} b_j \otimes b_j^*.$$
\end{defn}

%We return to packing properties of fusion frames.
Given a pair of mutually unbiased bases, then one can select  coordinate projections from the respective bases to form mutually unbiased fusion frames.

\begin{prop}\label{prop_MUBs_give_MUFFs}
If $\CAL{B} = \CBRACK{b_j}_{j=1}^m$ and ${\CAL{B}'}=\CBRACK{{b'}_j}_{j=1}^{m}$ are a pair of mutually unbiased bases for $\BB{F}^m$ and $\CAL J, \CAL{J}' \subset \ZM$ with $l=|\CAL J|=|\CAL J'|$, then
$
\TR\PARENTH{P_{\CAL J} {P'}_{\CAL J'}} = \frac{l^2}{m},$
where
$P_{\CAL J}$ is the $\CAL J$-coordinate projection with respect to $\CAL{B}$ and  ${P'}_{\CAL J'}$ is $\CAL J'$-coordinate projection with respect to ${\CAL{B}'}.$  Moreover, if $\CAL{F} = \{ P_{\CAL J}\}_{\CAL J \in  \BLOCKSET}$ is a set of coordinate projections with respect to $\CAL{B}$, $\CAL{F}' = \{ {P'}_{\CAL J'}\}_{\CAL J' \in  \BLOCKSET'}$  is a set of coordinate projections with respect to $\CAL{B}'$, $\CAL{F}$ is an  $(\left| \BLOCKSET \right|,l,m)$-fusion frame and $\CAL{F}'$ is a $(\left| \BLOCKSET'\right|,l,m)$-fusion frame, then $\CAL{F} \cup \CAL{F}'$ is mutually unbiased.
\end{prop}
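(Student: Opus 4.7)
The proof is a direct computation leveraging only the defining inner-product identity for mutually unbiased bases, so my plan is to reduce each trace of a product of coordinate projections to a double sum of squared moduli of basis inner products, each of which equals $1/m$ by hypothesis.

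First I would compute $\TR(P_{\CAL J} P'_{\CAL J'})$ by substituting the definitions $P_{\CAL J} = \sum_{j \in \CAL J} b_j \otimes b_j^*$ and $P'_{\CAL J'} = \sum_{j' \in \CAL J'} b'_{j'} \otimes (b'_{j'})^*$, expanding the product, and using linearity of the trace together with the identity $\TR((b_j \otimes b_j^*)(b'_{j'} \otimes (b'_{j'})^*)) = |\langle b_j, b'_{j'}\rangle|^2$. The mutual unbiasedness of $\CAL B$ and $\CAL B'$ collapses every one of these $l^2$ summands to $1/m$, yielding the claimed value $l^2/m$.

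For the second assertion, I would note that this single calculation applies to every pair $(P_{\CAL J}, P'_{\CAL J'})$ with $\CAL J \in \BLOCKSET$ and $\CAL J' \in \BLOCKSET'$, since the cardinality assumption $|\CAL J| = |\CAL J'| = l$ is uniform across the respective families (the rank of the projections coincides with $l$ because distinct basis vectors in an orthonormal basis span orthogonal lines, so $P_{\CAL J}$ has rank $|\CAL J|$ and similarly for $P'_{\CAL J'}$). Invoking the definition of mutually unbiased fusion frames then directly identifies the pair $\CAL F, \CAL F'$ as mutually unbiased.

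There is no real obstacle here; the only subtleties are bookkeeping ones, namely to confirm that coordinate projections have the advertised rank and that the hypothesis of the proposition already supplies $\CAL F$ and $\CAL F'$ as bona fide $(|\BLOCKSET|,l,m)$- and $(|\BLOCKSET'|,l,m)$-fusion frames so that the definition of mutual unbiasedness applies verbatim. Once the trace is unpacked into $|\langle b_j, b'_{j'}\rangle|^2$ values, the rest is just counting.
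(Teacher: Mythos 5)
Your proposal is correct and follows essentially the same route as the paper: expand $\TR(P_{\CAL J}P'_{\CAL J'})$ into the double sum $\sum_{j\in\CAL J,\,j'\in\CAL J'}|\langle b_j,b'_{j'}\rangle|^2$, use mutual unbiasedness to evaluate each of the $l^2$ terms as $1/m$, and observe that the second claim follows by applying the same computation to every pair of blocks. The additional bookkeeping you mention about ranks is fine but not needed beyond what the paper already assumes.
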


\begin{proof}
We compute
$$
\TR(P_{\CAL J} {P'}_{\CAL J'}) 
= \sum\limits_{j \in \CAL J, j \in \CAL J'} \TR
\PARENTH{
b_j \otimes \PARENTH{b_j}^* {b'}_{j'} \otimes \PARENTH{ {b'}_{j'}}^* 
}
=  \sum\limits_{j \in \CAL J, j' \in \CAL J'} \left|\left\langle b_j, {b'}_{j'} \right\rangle \right|^2= \frac{l^2}{m}
.
$$
The claim about mutual unbiasedness follows directly from this computation.
\end{proof}

We repeat the embedding of fusion frames for the special case of coordinate projections. Tight fusion frames of coordinate projections have also
been investigated as commutative quantum designs by Zauner \cite{Zauner1999}. We first focus on the structure of optimal packings of coordinate projections.

\begin{thm} \label{thm:coordembed}
Let $\CAL{B} = \{b_j\}_{j=1}^m$ be an orthonormal basis for $\BB{F}^m$ and let $\BLOCKSET = \{\CAL{J}_j\}_{j=1}^n$ be a set of subsets of $\ZM$, each of size $|\CAL{J}_j|=l$.  If $\CAL{F}=\{P_j\}_{j=1}^n$ is a family of projections %defined by $P_j = P_{\CAL{J}_j}$ for each $j \in \ZN$, where
for which  $P_j$ is the $\CAL{J}_j$-coordinate projection with respect to $\CAL{B}$,
then the set of unit vectors $\CAL{V} =\{V_j\}_{j=1}^n$
obtained as in Theorem~\ref{thm:embed} resides in a $m-1$-dimensional subspace
of the real Euclidean space of symmetric matrices and
% \subset \BB{R}^{m-1}$ such that 
$$
\TR\PARENTH{P_j P_{j'}}= \frac{l^2}{m} + \frac{l (m-l)}{m} \langle V_j, V_{j'}\rangle,
$$
for every $j,j' \in \ZN$.  
\end{thm}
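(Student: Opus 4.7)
The plan is to exploit the fact that a coordinate projection is diagonal when expressed in its associated basis. Fix the orthonormal basis $\CAL{B}$ as the reference, and write every symmetric or Hermitian matrix in coordinates with respect to $\CAL B$. Then $P_j = \sum_{k \in \CAL J_j} b_k b_k^*$ is the diagonal matrix whose $k$-th diagonal entry equals $1$ for $k \in \CAL J_j$ and $0$ otherwise. Consequently $V_j = \sqrt{m/(l(m-l))}\,(P_j - (l/m) I_m)$ is also diagonal, and its diagonal entries sum to $\sqrt{m/(l(m-l))}\,(l - (l/m)\cdot m) = 0$.

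Next, I would observe that the set of real diagonal $m \times m$ matrices with vanishing trace is a real linear subspace of the ambient $\DF$-dimensional space of symmetric (or Hermitian, in the complex case) trace-zero matrices. This subspace has dimension $m-1$, since it is cut out from the $m$-dimensional space of real diagonal matrices by the single linear condition that the trace vanish. The previous paragraph places every $V_j$ in this $(m-1)$-dimensional subspace, which establishes the first claim.

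For the inner product identity, no separate computation is required: the formula
\[
\TR(P_j P_{j'}) = \frac{l^2}{m} + \frac{l(m-l)}{m}\,\langle V_j, V_{j'}\rangle
\]
is a direct instance of the general identity in Theorem~\ref{thm:embed}, applied to the particular projections $P_j, P_{j'}$. The Hilbert--Schmidt inner product of two diagonal matrices coincides with the Euclidean inner product of their diagonal vectors, so the statement is consistent with realizing $\CAL V$ inside $\BB R^{m-1}$.

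There is essentially no obstacle here; the content of the theorem is the dimension reduction from $\DF$ to $m-1$, which is a straightforward consequence of the diagonal structure of coordinate projections. The proof is therefore little more than unwinding definitions and invoking Theorem~\ref{thm:embed} for the inner product formula.
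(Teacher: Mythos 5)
Your proof is correct and follows essentially the same route as the paper: you observe that coordinate projections are diagonal in the basis $\CAL{B}$, so the embedded vectors $V_j$ land in the $(m-1)$-dimensional space of traceless real diagonal matrices, and the inner product identity is inherited directly from Theorem~\ref{thm:embed}. No substantive differences to report.
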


\begin{proof}
By definition,  $\CAL{F}$ is a set of rank-$l$ orthogonal projections that can be regarded as diagonal matrices when represented in the basis $\CAL{B}$.
The mapping
$
P_j \mapsto  V_j := \sqrt{\frac{m}{l(m-l)}} (P_j -  \frac{l}{m} I_m ) $
embeds the projections into the real diagonal matrices with zero trace. This implies
$
\dim\bigl(\SPAN \CBRACK{V_j}_{j=1}^n   \bigr) \leq m-1 .
$
The identity for the Hilbert-Schmidt inner products follows directly from the definition of $\{V_j\}_{j=1}^n$.  
\end{proof}
In this special case, the Rankin bound can be expressed in terms of the subsets indexing the coordinate projections,
because $\TR( P_{\CAL{J}} P_{\CAL{J}'} ) = |\CAL{J} \cap \CAL{J}'|$ for any  $\CAL J, \CAL{J}' \subset \ZM$. 
A more general bound of this type has been derived by Johnson in the context of {\it constant-weight codes} \cite[Inequality (14)]{Johnson1972}.
%%  Johnson Upper bounds for constant weight error correcting codes, Dscirete Mathematics 1972
%% Theorem 10 in Brouwer, Shearer, Sloane, Smith
%% BEst, Brouwer MacWilliams Odlyzko bounds for binary codes 1978
%This result should be known in combinatorics, but the authors did not find a specific reference for it other than
%a list of possible extremality  properties for subsets \cite{ErdosKleitman1974}.

\begin{cor} \label{cor:1designbd}
 If $\BLOCKSET$ is a collection of $n$ subsets of $\ZM$ for which $n>m$ and each $\CAL J \in \BLOCKSET$
 has size $|\CAL J|=l$, then
 $$
    \max_{\CAL J , \CAL{J}' \in \BLOCKSET, \CAL J \ne \CAL{J}'} |\CAL J \cap \CAL{J}' | \ge \frac{l^2}{m} \, .
 $$
 Moreover, if $n=2(m-1)$ and equality holds in this bound, then $\BLOCKSET$ can be partitioned into $m-1$ disjoint pairs of 
 subsets of size $m/2$.
\end{cor}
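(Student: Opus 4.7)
The plan is to use the low-dimensional embedding from Theorem~\ref{thm:coordembed}: since each $P_j$ is a coordinate projection with respect to $\CAL{B}$, the associated unit vectors $V_j$ lie in the $(m-1)$-dimensional real Euclidean subspace of trace-zero diagonal matrices, rather than the full $\DF$-dimensional ambient space of Theorem~\ref{thm:embed}. The hypothesis $n>m$ is exactly the threshold $n>(m-1)+1$ required to invoke the improved Rankin bound in this $(m-1)$-dimensional setting. Thus Rankin's bound gives $\max_{j\ne j'}\langle V_j,V_{j'}\rangle\ge 0$, and the conversion identity of Theorem~\ref{thm:coordembed} together with the trivial observation $\TR(P_jP_{j'})=|\CAL{J}_j\cap\CAL{J}_{j'}|$ (which holds because both projections are $0/1$-diagonal in $\CAL{B}$) translates this into $\max_{\CAL{J}\ne\CAL{J}'}|\CAL{J}\cap\CAL{J}'|\ge l^2/m$.

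For the equality case with $n=2(m-1)$, we are in the regime $n=2d$ with $d=m-1$, so Rankin's theorem forces the configuration $\{V_j\}_{j=1}^n$ to be an orthoplex in this $(m-1)$-dimensional subspace. Consequently, the $V_j$'s partition into $m-1$ antipodal pairs $\{V_j,V_{j'}\}$ with $V_j=-V_{j'}$. Unwinding the definition $V_j=\sqrt{m/(l(m-l))}\,(P_j-(l/m)I_m)$, antipodality is equivalent to $P_j+P_{j'}=(2l/m)I_m$. The left-hand side is a diagonal matrix whose entries lie in $\{0,1,2\}$, so for every diagonal entry to equal $2l/m$ one is forced to have $l=m/2$ and $P_j+P_{j'}=I_m$; equivalently, $\CAL{J}_{j'}=\ZM\setminus\CAL{J}_j$, which yields the desired partition of $\BLOCKSET$ into $m-1$ pairs of complementary subsets of size $m/2$.

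The argument is essentially a direct transfer of Rankin's orthoplex rigidity to the combinatorial set-system setting via Theorem~\ref{thm:coordembed}, so no serious obstacle is anticipated. The only delicate point is verifying that antipodality of two coordinate-projection embeddings automatically forces both $l=m/2$ and set-theoretic complementarity, but this is immediate because the sum of two $0/1$-diagonal matrices can equal a scalar multiple of $I_m$ only when that scalar is $1$ and the two diagonal supports partition $\ZM$.
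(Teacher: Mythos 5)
Your proposal is correct and follows exactly the route the paper intends: the corollary is stated immediately after Theorem~\ref{thm:coordembed} together with the observation $\TR(P_{\CAL J}P_{\CAL J'})=|\CAL J\cap\CAL J'|$, so the intended argument is precisely the application of Rankin's bound in the $(m-1)$-dimensional space of traceless diagonal matrices, with the orthoplex rigidity forcing antipodal pairs and hence $l=m/2$ and complementary blocks. Your unwinding of the antipodality condition $P_j+P_{j'}=(2l/m)I_m$ is the right (and complete) way to finish the equality case.
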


In light of Corollary~\ref{cor:1designbd} and Proposition~\ref{prop_MUBs_give_MUFFs}, we pursue the  idea of generating tight, orthoplex-bound achieving fusion frames by using coordinate projections from {\it maximal sets of mutually unbiased bases}, which are sets of mutually unbiased bases that achieve the cardinality bound in Theorem~\ref{thm:DGS}.
In order to construct an orthoplex-bound achieving fusion frame, 
 we need 
$n > \DF +1$ subspaces.  Thus, given a maximal set of mutually unbiased bases, then we need a sufficient number of coordinate projections per basis with low Hilbert-Schmidt inner products.  
In order to bound the inner products between coordinate projections corresponding to a given orthonormal basis, the 
intersection of any two different index sets $\CAL J$ and $\CAL{J}'$ 
must have a small intersection, which we call a cohesiveness bound.
%We call a collection of index sets that satisfies this criterion \textem{cohesive}. 
According to Corollary~\ref{cor:1designbd}, the maximum number of such subsets whose intersections are at most of 
size $l^2/m$ is $2(m-1)$.

\begin{defn}
Let $ \BLOCKSET$ be a collection of subsets of $\ZM$, each $\CAL{J} \in \BLOCKSET$ of size $l$.
We say that $\BLOCKSET$ is {\bf $c$-cohesive} if there exists $c>0$ such that
$$
\max\limits_{\substack{\CAL J, \CAL{J}' \in  \BLOCKSET \\ \CAL J \neq \CAL{J}' }} \left| \CAL J \cap \CAL{J}' \right| \leq c.
$$
If $\BLOCKSET$ is $l^2/m$-cohesive and $| \BLOCKSET| = 2(m-1)$, then it is {\bf maximally orthoplectic}.\end{defn}

\begin{thm}\label{th_costruct_OGFF}
Let $ \BLOCKSET$ be an $l^2/m$-cohesive collection of subsets of $\ZM$, where each $\CAL{J} \in \BLOCKSET$ is of size $l$, let $\{ \CAL{B}_k \}_{k \in K}$ be a set of mutually unbiased bases for $\BB{F}^m$, where $|K| |  \BLOCKSET| > \DF+1$ and $\CAL{B}_k = \CBRACK{ b_j^{(k)} }_{j=1}^m$ for each $k \in K$, and let $n =  | K | | \BLOCKSET|$.  If
the set 
$
\CAL{F} = \CBRACK{ P_{\CAL J}^{(k)} : k \in K, \CAL J \in  \BLOCKSET}
$
forms an $(n,l,m)$-fusion frame, where each
 $P_{\CAL J}^{(k)}$ denotes the $\CAL J$-coordinate projection with respect to $\CAL{B}_k$, then 
$\CAL F$ is an orthoplex-bound achieving  $(n,l,m)$-fusion frame.
%$
%n =  | K | | \BLOCKSET|
%.
%$
Moreover, if $(\ZM,  \BLOCKSET)$ is maximally orthoplectic and if $|K|= \KF$, then the set $\CAL{F}$ is a tight, maximal orthoplectic  fusion frame.
\end{thm}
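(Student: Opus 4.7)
The plan is to bound the Hilbert--Schmidt inner products $\TR(P_{\CAL J}^{(k)} P_{\CAL J'}^{(k')})$ uniformly over distinct pairs of projections in $\CAL F$, which will yield orthoplex-bound achievement, and then to exploit the rigid combinatorial structure of a maximally orthoplectic $(\ZM, \BLOCKSET)$ to extract tightness and thereby the maximal orthoplectic property.

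For the first statement, I would split the computation according to whether the two projections come from the same basis. When $k = k'$ and $\CAL J \ne \CAL J'$, both projections are diagonal in $\CAL B_k$, so Theorem~\ref{thm:coordembed} gives $\TR(P_{\CAL J}^{(k)} P_{\CAL J'}^{(k)}) = |\CAL J \cap \CAL J'| \le l^2/m$ by the cohesiveness hypothesis on $\BLOCKSET$. When $k \ne k'$, Proposition~\ref{prop_MUBs_give_MUFFs} yields exact equality $\TR(P_{\CAL J}^{(k)} P_{\CAL J'}^{(k')}) = l^2/m$. Combining both cases, $\max_{j \ne j'}\TR(P_j P_{j'}) \le l^2/m$, and because $n > \DF + 1$, the matching lower bound from Corollary~\ref{cor_2df} forces equality. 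Hence $\CAL F$ attains the orthoplex bound and, being a minimizer of the maximum pairwise inner product among $(n,l,m)$-fusion frames, is automatically Grassmannian; this is exactly orthoplex-bound achievement.

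For the second statement, the cardinality is $n = |K||\BLOCKSET| = \KF \cdot 2(m-1)$, which equals $(m+2)(m-1) = 2\DF$ in the real case and $2(m^2-1) = 2\DF$ in the complex case, using $\KF(m-1) = \DF$ in both settings. What remains, and is the main subtlety, is tightness. Here I would invoke the second conclusion of Corollary~\ref{cor:1designbd}: a maximally orthoplectic $\BLOCKSET$ partitions into $m-1$ disjoint complementary pairs $\{\CAL J, \CAL J^c\}$ of subsets of size $m/2$. For any such pair and any basis $\CAL B_k$, one has $P_{\CAL J}^{(k)} + P_{\CAL J^c}^{(k)} = I_m$, so $\sum_{\CAL J \in \BLOCKSET} P_{\CAL J}^{(k)} = (m-1) I_m$ for each fixed $k$. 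Summing over $k \in K$ then yields $\sum_{P \in \CAL F} P = \KF(m-1) I_m = \DF \cdot I_m$, proving tightness with frame bound $A = \DF$. Combined with $n = 2\DF$ and the orthoplex-bound achievement from the first part, $\CAL F$ meets the definition of a maximal orthoplectic fusion frame.

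Without the complementary-pair decomposition from Corollary~\ref{cor:1designbd}, the single-basis sums $\sum_{\CAL J \in \BLOCKSET} P_{\CAL J}^{(k)}$ need not be scalar multiples of $I_m$ (they are always diagonal in $\CAL B_k$, but the diagonal need not be constant), and there is no obvious reason for the inter-basis sum to collapse to a multiple of the identity. Recognizing that the $l^2/m$-cohesion bound at cardinality $2(m-1)$ is rigid enough to force complementary pairing is therefore the essential structural input, and is what I expect to be the principal obstacle to formulate cleanly.
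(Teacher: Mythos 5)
Your proposal is correct and follows essentially the same route as the paper: the same case split (cohesiveness for same-basis pairs, Proposition~\ref{prop_MUBs_give_MUFFs} for cross-basis pairs) combined with the lower bound of Corollary~\ref{cor_2df}, and the same appeal to Corollary~\ref{cor:1designbd} for tightness in the maximal case, which you merely make more explicit by writing out the complementary-pair decomposition $P_{\CAL J}^{(k)}+P_{\CAL J^c}^{(k)}=I_m$ and the resulting frame bound $A=\DF$.
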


\begin{proof}
%By Proposition~\ref{cor:prop_1designs}, $\CBRACK{ P_{\CAL J}^{(k)} }_{\CAL J \in  \BLOCKSET}$ is a tight fusion frame for each $k \in K$, so the combining 
%them in
%$\CAL{F}$ yields a tight fusion frame. 
The cardinality requirement in the definition of orthoplex-bound achieving fusion frames is satisfied since $n > \DF +1$.
Let  $k, k' \in K$.
If $k \neq k'$, then 
$$
\TR\PARENTH{ P_{\CAL J}^{(k)} P_{\CAL{J}'}^{(k')}   } = \frac{l^2}{m}
$$
for every $\CAL J, \CAL{J}' \in  \BLOCKSET$ by 
Proposition~\ref{prop_MUBs_give_MUFFs}.
If $k = k'$, then the fact that $\BLOCKSET$ is  $l^2/m$-cohesive  yields
\begin{align*}
\max\limits_{\substack{ \CAL J, \CAL{J}' \in  \BLOCKSET \\ \CAL J \neq \CAL{J}'}}\TR\PARENTH{ P_{\CAL{J}}^{(k)} P_{\CAL{J}'}^{(k)}   }
%&=
%\max\limits_{\tiny J,J' \in  \BLOCKSET} \sum\limits_{\tiny \begin{array}{cc} j \in J \\ j' \in J' \end{array}} \TR \PARENTH{
%b_j^{(k)} \PARENTH{b_j^{(k)}}^*  b_{j'}^{(k)}  \PARENTH{b_{j'}^{(k)}}^*
% }
%\\
&=
%\max\limits_{\substack{ \CAL J, \CAL{J}' \in  \BLOCKSET \\ \CAL J \neq \CAL{J}'}} \sum\limits_{\substack{ j \in \CAL J \\ j' \in \CAL{J}'}} \left| \langle 
%b_j^{(k)},   b_{j'}^{(k)} 
%\rangle
%\right|^2\\
%&=
\max\limits_{\substack{ \CAL J, \CAL{J}' \in  \BLOCKSET \\ \CAL J \neq \CAL{J}'}}
\left| \CAL J \cap \CAL{J}' \right|
\leq
\frac{l^2}{m},
 \end{align*}
which shows that $\CAL{F}$ is an orthoplex-bound achieving  fusion frame.
Finally, if $\BLOCKSET$ is maximally orthoplectic and the set of mutually unbiased bases is maximal, then 
Corollary~\ref{cor:1designbd} shows that the coordinate projections belonging to each basis sum to a multiple of the identity, so the corresponding fusion frame is tight.
Hence, the union of all the coordinate projections belonging to the mutually unbiased bases 
forms a set of $n = \KF \left(2 m-2\right)= 2\DF$ orthogonal projections whose pairwise inner products 
are bounded by $l^2/m$, which shows that $\CAL{F}$ is a maximal orthoplectic  fusion frame.
\end{proof}

Following Zauner's ideas, we repeat the study of design properties for the special case of a fusion frame formed by coordinate projections.
To this end, we define the {\it diagonal coherence tensor}, 
$$
    D_{t,l,m} = \frac{1}{\left({m \atop l }\right)} \sum_{\CAL J \in \BB J} D_{\CAL J}^{\otimes t} \, ,
$$  where $\BB J$ is the set of all subsets of $\ZM$ of size $l$,
and, for each $\CAL{J} \in \BB J$,  $D_{\CAL J}$ is the $\CAL J$-coordinate projection with respect to the canonical basis.
An elementary counting argument shows $D_{1,l,m} = \frac{l}{m} I_m$ and
$$
   D_{2,l,m} = \frac{l}{m}\sum_{j=1}^m E_{j,j} \otimes E_{j,j} + \frac{l(l-1)}{m(m-1)} \sum_{\substack{j,j'=1\\j \ne j'}}^m E_{j,j} \otimes E_{j',j'}, \, 
$$
where $\{e_j\}_{j=1}^m$ denotes the canonical basis for $\BB F^m$ and $E_{j,j'}=e_j \otimes e_{j'}^*$ for each $j,j' \in \ZM$.
By squaring the diagonal entries and summing, we compute
$$
    \TR (D_{2,l,m}^2 ) = m \frac{l^2}{m^2} + m(m-1) \frac{l^2(l-1)^2}{m^2 (m-1)^2} = \frac{l^2}{m (m-1)}(l^2 - 2l + m) \, .
$$

With this notation, the combinatorial notion of a {\it block $t$-design}  is characterized conveniently.
\begin{defn}
A {\bf $t$-$(m,l, \lambda)$ block design} $\BLOCKSET$ is a  collection of subsets of $\ZM$ called {\bf blocks}, where each block $\CAL{J} \in \BB S$ has cardinality $l$, 
such that every subset of $\ZM$ with cardinality $t$ is contained in exactly $\lambda$ blocks.
When the parameters are not important or implied by the context, then $\BLOCKSET$ is also referred to as a {\bf block $t$-design}.
The special case of a block $2$-design is also referred to as a {\bf balanced incomplete block design}.
\end{defn}
%\begin{defn}
%A {\bf $t$-$(m,l, \lambda)$ (block) design} is a pair $(\BB{X}, \BLOCKSET)$, where is $\BB{X}$ is a set of $m$ elements and $\BLOCKSET$ is a collection of subsets of $\BB{X}$ of cardinality $l$, called {\bf blocks}, such that every subset of $\BB{X}$ with cardinality $t$ is contained in exactly $\lambda$ blocks.
%When the parameters are not important or implied by the context, then $(\BB{X}, \BLOCKSET)$ is also  referred to as a {\bf $t$-design}.
%\end{defn}

\begin{thm}[Zauner {\cite[Theorem 1.12]{Zauner1999}}]\label{thm:diagdesigns}
A collection $\BLOCKSET$ of subsets of $\ZM$, where each $\CAL J  \in \BLOCKSET$ has size $l$, is a $t$-$(m,l, \lambda)$ block design if and only if
$$
   \frac{1}{|\BLOCKSET|} \sum_{\CAL J \in \BLOCKSET} D_{\CAL J}^{\otimes t} = D_{t,l,m}, \, 
$$
with
$
 \lambda = |\BLOCKSET | \TR \PARENTH{ D_{t,l,m} \bigotimes_{s=1}^t E_{s,s}}, 
$ 
where $\{e_j\}_{j=1}^m$ denotes the canonical basis for $\BB F^m$ and $E_{s,s}=e_s \otimes e_{s}^*$ for each $s \in \ZM$.
\end{thm}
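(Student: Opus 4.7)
The plan is to exploit the fact that each $D_{\CAL J}^{\otimes t}$ is a diagonal matrix with respect to the canonical tensor basis $\{e_{j_1} \otimes \cdots \otimes e_{j_t}\}$ of $(\BB F^m)^{\otimes t}$, with diagonal entry at $(j_1,\ldots,j_t)$ equal to $1$ if $\{j_1,\ldots,j_t\} \subseteq \CAL J$ and $0$ otherwise. Thus both sides of the claimed identity are diagonal operators, and proving the equivalence reduces to comparing all diagonal entries.

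First I would note that for any $t$-tuple $(j_1,\ldots,j_t)$ whose set of distinct indices has cardinality $s \le t$, the diagonal entry of $\sum_{\CAL J \in \BLOCKSET} D_{\CAL J}^{\otimes t}$ at that tuple equals the number of blocks in $\BLOCKSET$ that contain this $s$-subset, whereas the corresponding entry of $|\BLOCKSET|\, D_{t,l,m}$ equals $|\BLOCKSET|\binom{m-s}{l-s}/\binom{m}{l} = |\BLOCKSET|\binom{l}{s}/\binom{m}{s}$, by the analogous counting of $l$-subsets of $\ZM$ containing the same $s$-subset. Since both entries depend only on the underlying distinct-index set and its size, the proposed operator identity is equivalent to the combinatorial condition that every $s$-subset of $\ZM$ with $s \le t$ is contained in exactly $|\BLOCKSET|\binom{l}{s}/\binom{m}{s}$ blocks.

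For the forward direction, I would invoke the classical double-counting argument that any $t$-$(m,l,\lambda)$ block design is automatically an $s$-design for every $s \le t$ with replication number $\lambda_s = \lambda \binom{m-s}{t-s}/\binom{l-s}{t-s}$; combining this with the identity $|\BLOCKSET|\binom{l}{t} = \lambda \binom{m}{t}$ yields $\lambda_s = |\BLOCKSET|\binom{l}{s}/\binom{m}{s}$, matching the diagonal entries above. The converse is immediate: restricting the diagonal equality to tuples $(j_1,\ldots,j_t)$ with all $t$ indices distinct shows that every $t$-subset of $\ZM$ lies in exactly $\lambda = |\BLOCKSET|\binom{l}{t}/\binom{m}{t}$ blocks, which is the $t$-design property.

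Finally, for the explicit formula for $\lambda$, observe that $\bigotimes_{s=1}^t E_{s,s}$ is the rank-one projection onto $e_1 \otimes e_2 \otimes \cdots \otimes e_t$, so $\TR\bigl(D_{t,l,m}\bigotimes_{s=1}^t E_{s,s}\bigr)$ extracts the single diagonal entry of $D_{t,l,m}$ at the distinct-index tuple $(1,2,\ldots,t)$; multiplying by $|\BLOCKSET|$ then gives the number of blocks containing $\{1,\ldots,t\}$, namely $\lambda$. The main obstacle is really just the combinatorial bookkeeping for tuples with repeated indices, but this is painless since both sides' diagonal entries are functions only of the size of the underlying distinct-index subset, and the $s$-design consequence of the $t$-design property handles all $s \le t$ uniformly.
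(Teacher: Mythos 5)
Your proof is correct and follows essentially the same route as the paper: both sides are diagonal in the canonical tensor basis, and the identity is verified entry by entry. You are in fact more careful than the paper's terse argument, since you explicitly handle tuples with repeated indices via the standard fact that a $t$-design is an $s$-design for all $s \le t$ with $\lambda_s = |\BLOCKSET|\binom{l}{s}/\binom{m}{s}$ --- a point the paper's proof leaves implicit in its ``elementary counting argument.''
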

\begin{proof}
The definition of $\BB{J}$ implies that any subset of $\ZM$ of size $t$ is contained in a fixed number of sets from $\BB J$.
Since both sides of the claimed identity are diagonal in the standard basis, the block design property is a consequence of
the fact that for any subset $\{j_1, j_2, \dots, j_t\} \subset \ZM$, 
$P_{\CAL J}^{\otimes t}$ has an  eigenvector $ e_{j_1} \otimes e_{j_2} \otimes \cdots \otimes e_{j_t} $
corresponding to eigenvalue one 
if and only if $\{j_1, j_2, \dots, j_t\} \subset \CAL J$; otherwise, it corresponds to eigenvalue zero.  The claimed value for $\lambda$ follows from an elementary counting argument.
\end{proof}

In the special case where $t=1$ in Theorem~\ref{thm:diagdesigns}, 
we obtain the correspondence between the block 1-design property of $\BLOCKSET$ and tightness of the corresponding fusion frame of coordinate projections.

\begin{cor}\label{cor:prop_1designs}
If $\CAL{B}$ is any orthonormal basis for $\BB{F}^m$, then 
a set of coordinate projections $\{P_{\CAL J}\}_{\CAL J \in \BLOCKSET}$ with respect to $\CAL B$ is a tight fusion frame if and only if $\BLOCKSET$
is a {\it $1$-design}. 
\end{cor}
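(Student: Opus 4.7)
The plan is to reduce this to a direct application of Theorem~\ref{thm:diagdesigns} in the case $t=1$, combined with the observation that both the tightness property and the block 1-design property are invariant under a change of orthonormal basis.

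First I would note that by the polarization identity (as observed right after the definition of fusion frames), tightness of $\{P_{\CAL J}\}_{\CAL J \in \BLOCKSET}$ is equivalent to $\sum_{\CAL J \in \BLOCKSET} P_{\CAL J} = c\, I_m$ for some constant $c > 0$. Let $U$ denote the unitary (or orthogonal) matrix sending the canonical basis to $\CAL B$, so that each coordinate projection with respect to $\CAL B$ satisfies $P_{\CAL J} = U D_{\CAL J} U^*$. Then $\sum_{\CAL J \in \BLOCKSET} P_{\CAL J} = c\, I_m$ if and only if $\sum_{\CAL J \in \BLOCKSET} D_{\CAL J} = c\, I_m$, since $U I_m U^* = I_m$. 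Thus, without loss of generality, it suffices to treat the case where $\CAL{B}$ is the canonical basis.

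Next, I specialize Theorem~\ref{thm:diagdesigns} to $t=1$. Using $D_{1,l,m} = \tfrac{l}{m} I_m$, the theorem states that $\BLOCKSET$ is a $1$-$(m,l,\lambda)$ block design if and only if
$$
\frac{1}{|\BLOCKSET|} \sum_{\CAL J \in \BLOCKSET} D_{\CAL J} = \frac{l}{m} I_m,
$$
with $\lambda = |\BLOCKSET|\, l/m$. In other words, $\BLOCKSET$ is a $1$-design precisely when $\sum_{\CAL J \in \BLOCKSET} D_{\CAL J}$ equals a (necessarily positive) multiple of the identity. Combining with the reduction of the previous paragraph, this is exactly the tightness condition for $\{P_{\CAL J}\}_{\CAL J \in \BLOCKSET}$.

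There is no real obstacle here; the result is essentially a translation of the $t=1$ case of Theorem~\ref{thm:diagdesigns} into the language of fusion frames. The only point requiring minimal care is the change of basis from $\CAL B$ to the canonical basis, which is handled by conjugation and leaves both sides of the equivalence unaffected.
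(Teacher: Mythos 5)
Your proof is correct and follows the same route the paper intends: the corollary is obtained by specializing Theorem~\ref{thm:diagdesigns} to $t=1$ (where $D_{1,l,m}=\tfrac{l}{m}I_m$), with the unitary conjugation handling the passage from the canonical basis to $\CAL B$ and a trace computation pinning down the constant $c=|\BLOCKSET|\,l/m$.
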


Given any positive integers $l$ and $m$ with $l \leq m$, one can choose the set of all blocks of size $l$ from $\ZM$ to form a tight fusion frame in this way.  

\begin{ex}
If $ \BLOCKSET = \CBRACK{\CAL  J : \CAL J \subset \ZM, |\CAL J|=l }$, the set of all blocks of size $l$,  then $\BLOCKSET$ forms a $t$-$(m,l, \lambda)$ block design.
%, where elementary counting shows $\lambda =  \PARENTH{\begin{array}{cc} m -1 \\l-1 \end{array}}$. 
Given
an orthonormal basis $\CAL{B}$ for $\BB F^m$, then the corresponding set of coordinate projections with respect to $\CAL{B}$, $\CAL{F} = \{P_{\CAL J}\}_{\CAL J \in  \BLOCKSET}$, forms a tight $(n,l,m)$-fusion frame by Corollary~\ref{cor:prop_1designs}, where $n =\left|  \BLOCKSET \right| = \PARENTH{{ m \atop l }}$.
\end{ex}

With the same proof as in Theorem~\ref{thm:ZaunerSidelnikov}, we obtain an analogous  characterization of block $t$-designs.
\begin{cor} \label{cor:diagdesigns}
Given a collection $\BLOCKSET$ of subsets of $\ZM$, where each $\CAL J  \in \BLOCKSET$ has size $l$, then
$$
  \frac{1}{|\BLOCKSET|^2} \sum_{\CAL J, \CAL{J}' \in \BLOCKSET} | \CAL J \cap \CAL{J}' |^t \ge \TR ( D_{t,l,m}^2 )
$$
and equality holds if and only if $ \BLOCKSET$ is a $t$-$(m,l,\lambda)$ block design with
$
 \lambda 
$
as in Theorem~\ref{thm:diagdesigns}.
\end{cor}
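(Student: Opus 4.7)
The plan is to mirror the argument used in the proof of Theorem~\ref{thm:ZaunerSidelnikov}, replacing the coherence tensor $K_{t,l,m}$ with the diagonal coherence tensor $D_{t,l,m}$ and the projections $P_j$ with the coordinate projections $D_{\CAL J}$. The idea is to form the self-adjoint operator
$$
   C \;=\; \sum_{\CAL J \in \BLOCKSET} D_{\CAL J}^{\otimes t} \;-\; |\BLOCKSET|\, D_{t,l,m},
$$
expand $\TR(C^2)$, and recognize that the resulting nonnegative quantity equals the difference between the two sides of the claimed inequality (after dividing by $|\BLOCKSET|^2$).

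To carry this out, I first exploit the fact that $D_{\CAL J}$ is an orthogonal projection that is diagonal in the canonical basis, so $\TR\bigl(D_{\CAL J}^{\otimes t} D_{\CAL J'}^{\otimes t}\bigr) = \bigl(\TR(D_{\CAL J} D_{\CAL J'})\bigr)^t = |\CAL J \cap \CAL{J}'|^t$. This handles the quadratic term of $\TR(C^2)$, producing $\sum_{\CAL J,\CAL{J}' \in \BLOCKSET} |\CAL J \cap \CAL{J}'|^t$. The cross term requires showing that $\TR\bigl(D_{\CAL J}^{\otimes t} D_{t,l,m}\bigr) = \TR\bigl(D_{t,l,m}^2\bigr)$ for every $\CAL J$ of size $l$. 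This follows from the definition $D_{t,l,m} = \binom{m}{l}^{-1} \sum_{\CAL J' \in \BB J} D_{\CAL J'}^{\otimes t}$ combined with the observation that the coordinate permutation group acts transitively on the set of all $l$-subsets of $\ZM$ and fixes $D_{t,l,m}$; therefore $\TR\bigl(D_{\CAL J}^{\otimes t} D_{t,l,m}\bigr)$ is independent of the particular $\CAL J$, and averaging over $\CAL J \in \BB J$ identifies the common value as $\TR(D_{t,l,m}^2)$. Summing over $\CAL J \in \BLOCKSET$ and using this identity twice collapses the cross and last term into $-|\BLOCKSET|^2 \TR(D_{t,l,m}^2)$.

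Combining these pieces yields
$$
   0 \;\le\; \TR(C^2) \;=\; \sum_{\CAL J, \CAL{J}' \in \BLOCKSET} |\CAL J \cap \CAL{J}'|^t \;-\; |\BLOCKSET|^2 \, \TR(D_{t,l,m}^2),
$$
which rearranges to the stated inequality after division by $|\BLOCKSET|^2$. Equality occurs precisely when $C=0$, that is, when $|\BLOCKSET|^{-1}\sum_{\CAL J \in \BLOCKSET} D_{\CAL J}^{\otimes t} = D_{t,l,m}$, and this is exactly the condition characterizing $t$-$(m,l,\lambda)$ block designs by Theorem~\ref{thm:diagdesigns}, with the value of $\lambda$ given there.

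The only non-mechanical step is verifying that $\TR\bigl(D_{\CAL J}^{\otimes t} D_{t,l,m}\bigr)$ is independent of $\CAL J$ and equals $\TR(D_{t,l,m}^2)$; once this symmetrization argument is in place, the inequality and the equality case both drop out of the same computation as in Theorem~\ref{thm:ZaunerSidelnikov}. I do not expect any genuine obstacle, since the structure of the diagonal tensor is entirely parallel to that of the unitary coherence tensor, with the symmetric group of coordinate permutations playing the role of the full orthogonal/unitary group.
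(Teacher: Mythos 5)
Your proposal is correct and follows essentially the same route as the paper, which simply invokes ``the same proof as in Theorem~\ref{thm:ZaunerSidelnikov}'': form $C=\sum_{\CAL J\in\BLOCKSET} D_{\CAL J}^{\otimes t}-|\BLOCKSET|\,D_{t,l,m}$, expand $\TR(C^2)\ge 0$, and characterize equality by $C=0$ via Theorem~\ref{thm:diagdesigns}. Your symmetrization argument for $\TR\bigl(D_{\CAL J}^{\otimes t}D_{t,l,m}\bigr)=\TR(D_{t,l,m}^2)$ is a correct filling-in of the detail the paper leaves implicit.
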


We can now deduce that subsets of coordinate projections
in
a maximal orthoplectic fusion frame constructed from a
maximal set of mutually unbiased bases realize block $2$-designs. We state this more generally as a correspondence between
Grassmannian 2-designs and balanced incomplete block designs.

\begin{thm} \label{thm:GdesignBIBD}
Let $\{\CAL{B}_k \}_{k \in K}$ be a maximal set of mutually unbiased bases for $\BB F^m$, so $|K|=\KF$ and let $\BLOCKSET \subset \ZM$ be a collection of subsets, each with size $l$.
If $\CAL{F} = \CBRACK{ P_{\CAL J}^{(k)}: k \in K, \CAL J \in \BLOCKSET }$ is a $(2\DF, m/2, m)$-fusion frame, where each $P^{(k)}_{\CAL J}$ is  the $\CAL J$-coordinate projection with respect to $\CAL{B}_k$,
then $\CAL{F}$ is a Grassmannian 2-design if and only if $\BLOCKSET$ is a $2$-$(m,m/2,m/2-1)$ block design.
\end{thm}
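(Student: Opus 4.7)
The plan is to apply Theorem~\ref{thm:ZaunerSidelnikov} to convert the Grassmannian 2-design property of $\CAL F$ into a quadratic identity on the pairwise Hilbert-Schmidt inner products $\TR(P_i P_j)$, then split the resulting double sum according to whether the two projections come from the same basis or from different bases. The cross-basis contributions are completely controlled by Proposition~\ref{prop_MUBs_give_MUFFs}, which forces every inner product $\TR(P_{\CAL J}^{(k)} P_{\CAL{J}'}^{(k')})$ with $k \ne k'$ to equal $l^2/m$, while each same-basis contribution reduces to $|\CAL J \cap \CAL{J}'|$, giving $\KF$ identical copies of $\sum_{\CAL J, \CAL{J}' \in \BLOCKSET} |\CAL J \cap \CAL{J}'|^2$. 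Using that the fusion frame being $(2\DF,m/2,m)$ forces $l=m/2$ and $|\BLOCKSET|=2(m-1)$, this yields
$$
\sum_{i,j=1}^{2\DF}(\TR(P_i P_j))^2 = \KF\sum_{\CAL J, \CAL{J}' \in \BLOCKSET} |\CAL J \cap \CAL{J}'|^2 + \KF(\KF-1)|\BLOCKSET|^2 (l^2/m)^2.
$$

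By Theorem~\ref{thm:ZaunerSidelnikov}, $\CAL F$ is a Grassmannian 2-design if and only if the left-hand side equals $n^2 \TR(K_{2,m/2,m}^2)$, which translates to $\sum_{\CAL J, \CAL{J}'} |\CAL J \cap \CAL{J}'|^2$ taking a specific value, computable via Corollary~\ref{cor_K2lm_val}. On the other hand, Corollary~\ref{cor:diagdesigns} characterizes the block 2-design property of $\BLOCKSET$ (with $l=m/2$) as equality of the very same sum with $|\BLOCKSET|^2 \TR(D_{2,m/2,m}^2)$. The two equivalences therefore agree if and only if
$$
\KF \TR(K_{2,m/2,m}^2) - (\KF-1)(l^2/m)^2 = \TR(D_{2,m/2,m}^2),
$$
which reduces to a direct algebraic check: substituting $\TR(K_{2,m/2,m}^2)=\frac{m^2(\DF+1)}{16\DF}$ (from Corollary~\ref{cor_K2lm_val} at $l=m/2$), the formula $\TR(D_{2,l,m}^2)=\frac{l^2(l^2-2l+m)}{m(m-1)}$ specialized at $l=m/2$, and the defining relation $\DF=(m-1)\KF$, both sides simplify to $\frac{m^3}{16(m-1)}$.

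Finally, the value $\lambda = m/2 -1$ follows from the $\lambda$-formula in Theorem~\ref{thm:diagdesigns}: since the off-diagonal coefficient of $D_{2,l,m}$ is $\frac{l(l-1)}{m(m-1)}$, one has $\lambda = |\BLOCKSET|\cdot \frac{l(l-1)}{m(m-1)}$, which reduces to $m/2-1$ after inserting $l=m/2$ and $|\BLOCKSET|=2(m-1)$. The main obstacle is not conceptual but bookkeeping: tracking the split sum carefully and confirming the single closed-form identity above; once that is done, the biconditional is an immediate consequence of the established Sidelnikov-type characterizations for Grassmannian 2-designs and for block 2-designs.
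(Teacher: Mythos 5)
Your proposal is correct and follows essentially the same route as the paper's proof: both split the Sidelnikov sum into cross-basis terms pinned at $l^2/m$ by mutual unbiasedness and same-basis terms equal to $\KF$ copies of $\sum_{\CAL J,\CAL J'}|\CAL J\cap\CAL J'|^2$, and then match the resulting threshold against $\TR(D_{2,m/2,m}^2)$ via Corollary~\ref{cor:diagdesigns}, with your identity $\KF\TR(K_{2,m/2,m}^2)-(\KF-1)(l^2/m)^2=\TR(D_{2,m/2,m}^2)$ (both sides $\tfrac{m^3}{16(m-1)}$) being exactly the numerical coincidence the paper verifies. The only difference is presentational: you package both implications into a single equivalence of thresholds, whereas the paper argues the two directions separately.
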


\begin{proof}
First, let $\CAL{F}$ be a Grassmannian 2-design. By Corollary~\ref{cor_K2lm_val} and the choice of $l=m/2$,
$$
   \sum_{k,k' \in K}\sum_{\CAL J,\CAL J' \in \BLOCKSET} \PARENTH{ \TR(P^{(k)}_{\CAL J} P^{(k')}_{\CAL J'} )}^2 = \frac{m^2 \DF (\DF +1)}{4} \, .
$$
From the assumption on the size $\KF = |K|$ and $\DF=(m-1) \KF$, the set $\BLOCKSET$ has size $|\BLOCKSET|=2(m-1)$.
Since the orthormal bases are unitarily equivalent, and each pair of them is mutually unbiased, we can obtain the sum
for the squared Hilbert-Schmidt inner products belonging to one basis,
\begin{align*}
  \sum_{\CAL J,\CAL J' \in \BLOCKSET} \PARENTH{\TR(P^{(k)}_{\CAL J} P^{(k)}_{\CAL J'} )}^2 & =  \frac{m^2 \DF (\DF+1)}{4 \KF} -  (\KF - 1)  4 (m-1)^2 \frac{m^2}{16}\, \\
  & = \frac{(m-1) m^2}{4} \bigl(\KF (m-1) + 1 -  (\KF - 1)(m-1)\bigr) = \frac{(m-1) m^3}{4} \, .
\end{align*}
The average of the Hilbert-Schmidt inner products of the $|\BLOCKSET|=2(m-1)$ coordinate projections
belonging to each basis is then
%We fix $k \in K$ and compare the value for the lower bound, $ \TR (D_{2,l,m}^2 )$, with
\begin{align*}
    \frac{1}{|\BLOCKSET|^2} \sum_{\CAL J,\CAL J' \in \BLOCKSET} \PARENTH{ \TR( P^{(k)}_{\CAL J} P^{(k)}_{\CAL J'} ) }^2 & =  \frac{1}{|\BLOCKSET|} \PARENTH{l^2 + (n-2) \frac{l^4}{m^2} } 
%\end{align*}
%and use $l = m/2$ to obtain
%$$
     =  \frac{m^3}{16(m-1)} \, .
\end{align*}
Specializing the expression  $ \TR \PARENTH{D_{2,l,m}^2 }= l^2(l^2-2l+m)/m(m-1)$ to $l=m/2$ shows that equality holds in Corollary~\ref{thm:GdesignBIBD},
so $\BLOCKSET$ is a block $2$-design. The parameter of the design then follows from $\lambda = |\BLOCKSET| \frac{l(l-1)}{m(m-1)} =m/2-1$. 

Conversely, if  $\BLOCKSET$ is a $2$-$(m,m/2,m/2-1)$ block design,
then equality holds in the inequality in Corollary~\ref{cor:diagdesigns}.
Since the squared inner product between any two coordinate projections belonging to different bases equals $l^4/m^2$, the lower bound
from Corollary~\ref{cor:diagdesigns} is equivalent to a lower bound for the squared inner products among the coordinate projections
belonging to all mutually unbiased bases, and both bounds are saturated. Using the preceding two identities shows that 
this implies that equality holds in the inequality in Theorem~\ref{thm:ZaunerSidelnikov}
and hence $\CAL{F}$ is a Grassmannian 2-design. 
%. Excluding
%the case $J=J'$ in the double sum of the squared inner products gives
%$$
%   \sum_{\substack{J, J' \in \BLOCKSET\\ J \ne J'}} ( \TR( P^{(k)}_J P^{(k)}_{J'} ) )^2 = \frac{n^2 m^3}{16(m-1)}- \frac{n m^2}{16}
%$$ 
%NOW ARGUE THAT THE AVERAGE INTERSECTION IS $l^2/m$, THE MAXIMUM, HENCE CONSTANT.
%Using the mutual unbiasedness of the fusion frame then gives that
%\begin{align*}
%  \frac{1}{n^2 \KF^2 } \sum_{k,k' \in K} \sum_{J,J' \in \BLOCKSET} (\TR( P^{(k)}_J P^{(k')}_{J'} ) )^2 & =
%   \frac{ m^3}{16 \KF (m-1)} + (\KF-1) \frac{l^4}{\KF m^2} \\ & =  \frac{m^2}{16 (m-1)\KF} (\KF m - \KF   + 1)\, .
%\end{align*}
%Comparing with $\TR( K_{2,l,m}^2 )$ shows that the fusion frame is a Grassmannian 2-design.
\end{proof}

 %PERHAPS WE SHOULD INCLUDE CALCULATION FOR COROLLARY

%Let $p$ be a positive integer.
%An $(n,l,m)$-fusion frame  $\{P_j\}_{j=1}^n$ for $\BB{F}^m$ is a {\bf $p$-fusion frame} if there exist constants $A, B>0$ such that for all $x \in \BB{F}^m$,
%$$
%    A\|x\|^{2p} \le \sum_{j=1}^m \|P_j x\|^{2p} \le B \|x\|^{2p} \, .
%$$
%If we can choose $A=B$, then it is a {\bf tight} $p$-fusion frame.
%5\end{defn}

%\section{Construction of fusion frames from mutually unbiased bases}

%\begin{prop}\label{prop_1designs}
%If $\CAL{B} = \{b_j\}_{j=1}^m$ is an orthonormal basis  for $\BB{F}^m$ and $(\ZM, \BLOCKSET)$ is a $1$-$(m,l,\lambda)$ design for some positive integer $\lambda$, then 
%$\CAL{F} = \{P_{\CAL J}\}_{\CAL J \in \BLOCKSET}$ forms a tight $(n, l,m)$-fusion frame, where $n=|\BLOCKSET|$ and where $P_{\CAL J}$ is the $\CAL J$-coordinate projection with respect to $\CAL{B}$.
%\end{prop}
%
%\begin{proof}
%We compute that
%$$
%\sum\limits_{\CAL J \in \BLOCKSET} P_{\CAL J} = \sum\limits_{\CAL J \in  \BLOCKSET} \sum\limits_{j \in \CAL J} b_j  \otimes b_j^* = \lambda \sum_{j=1}^m b_j \otimes b_j^* = \lambda I_m.
%$$
%\end{proof}

\section{A family of maximal orthoplectic  fusion frames}\label{sec:moff}
In this section, we construct a family of  $\{0,1\}$-matrices, $\{S_r\}_{r \in \BB{N}}$, and show that they generate maximally orthoplectic block $1$-designs, and therefore generate maximal orthoplectic fusion frames by Theorem~\ref{th_costruct_OGFF}. Consequently, by Theorem~\ref{thm:GdesignBIBD} they are 2-designs. We give an independent
proof of this fact to illustrate the rigidity in the construction of these matrices.

We recall from  Corollary~\ref{cor:half_ss} that a necessary condition for the existence of a maximal orthoplectic fusion frame is that the subspace dimension is
 $l =\frac{m}{2}$.  In order to exploit the existence of maximal sets of mutually unbiased bases in prime power dimensions, it is natural to focus on the case where $m$ is a power of two. We construct the block $1$-designs in terms of the associated  {\it incidence matrices}.
 
 \begin{defn}
 The {\bf incidence matrix} $S$ associated with a sequence   $ \BLOCKSET=\{\CAL J_1$, $\CAL J_2, \dots, \CAL J_n\}$ of subsets of $\ZM$
 is an $m\times n$ matrix whose $(a,b)$-th entry is
$$
S_{a,b} = \left\{ \begin{array}{cc} 1, &  a \in \CAL J_b \\ 0, & \text{otherwise}\end{array} \right. 
.$$
 \end{defn}

Let $S_1 =I_2$.  For $r \in \BB{N}$, let  $F_r = I_{(2^{r}-1)} \otimes  \left( \begin{array}{cc} 0 & 1 \\ 1 & 0\end{array} \right)$ and let $1_r$ denote the $2^{r} \times 1$ matrix of all ones and let $0_r$ be the $2^{r} \times 1$ matrix of all zeros.  For $r \geq 2$,
define
$S_r$ recursively and block-wise by %be the $2^t \times \PARENTH{2(2^t-1)}$ matrix defined %defined block-wise by
$$
S_r = \left(B_r^{(i)} \,\, B_r^{(ii)}  \, \, B_r^{(iii)}\right), %= 
%        \left( \begin{array}{cccc} 
%                                           1_t & 0_t & S_{t-1} & S_{t-1} \\
%                                           0_t & 1_t & S_{t-1} & S_{t-1} F_{t}
%         \end{array} \right).
$$
where% $B_t^{(i)}$ be the $2^t \times 2$ matrix defined block-wise by
$$
B_r^{(i)} = \left( \begin{array}{cc} 1_{r-1} & 0_{r-1}\\                                                
                                                    0_{r-1} & 1_{r-1} 
         \end{array} \right),
%$$  
%  let $B_t^{(ii)}$ and $B_t^{(iii)}$ be the $2^t \times (2^{t}-2)$ matrices defined block-wise by
%$$
B_r^{(ii)} = \left( \begin{array}{cc} S_{r-1}\\
                                                   S_{r-1}
         \end{array} \right),
\text{ and }
B_r^{(iii)} = \left( \begin{array}{cc} S_{r-1}\\
                                                    S_{r-1} F_{r-1}
         \end{array} \right).
$$

If $c_r$ and $\rho_r$ denote the number of columns and rows of $S_t$, respectively, then we have the recurrence relation
$$
c_1=2, c_{r+1} = 2 c_{r} +2, 
$$
which has the solution $c_r = 2^{r+1}-2$.  By the construction of $S_r$, $\rho_{r+1} =2 \rho_r = 2^r$, so $S_r$ is a $(2^{r+1}-2) \times 2^r$ matrix.

Furthermore, if $\tilde c^{(j)}_r$ denotes the number of ones in the $j$th column of $S_r$ and  $\tilde \rho^{(j)}_r$ denotes the number of ones in the $j$th row of $S_r$, then it is straightforward to verify, by construction, that both of these values are independent of $j$.  In particular, $\tilde c_r^{(j)} = 2^{r-1}$ for each $j$ and $\tilde \rho^{(j)}_r= 2^{r}-1$ for each $j$ in the index set of columns or rows, respectively.
  We record this as a lemma.
\begin{lemma}\label{lem_halfones}
Each column of $S_r$ has exactly $2^{r-1}$ ones among its entries, and each row of $S_r$ has exactly $2^r -1$ ones among its entries.
\end{lemma}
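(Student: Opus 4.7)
The plan is to prove both assertions by induction on $r$, leveraging the recursive block structure of $S_r$. The base case $r=1$ is immediate from $S_1=I_2$: each of the two columns contains a single $1=2^0$, and each of the two rows contains a single $1=2^1-1$.

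For the inductive step, assume the lemma holds for $S_{r-1}$, and examine the column sums of the three blocks comprising $S_r$. In $B_r^{(i)}$, each of the two columns is $1_{r-1}$ padded with zeros, so its column sum equals $2^{r-1}$. In $B_r^{(ii)}$, each column is an $S_{r-1}$ column stacked on top of itself, giving column sum $2\cdot 2^{r-2}=2^{r-1}$ by the inductive hypothesis. For $B_r^{(iii)}$, the crucial observation is that $F_{r-1}$ is a permutation matrix (a block-diagonal arrangement of $2\times 2$ swap blocks), so $S_{r-1}F_{r-1}$ has the same column sums as $S_{r-1}$; hence each column of $B_r^{(iii)}$ again sums to $2\cdot 2^{r-2}=2^{r-1}$. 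Concatenating yields the claim for columns of $S_r$.

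For the row sums, note that each row of $B_r^{(i)}$ contains exactly one $1$ (from either the top or bottom half of the block). Each row of $B_r^{(ii)}$ is a copy of a row of $S_{r-1}$, contributing $2^{r-1}-1$ ones by the inductive hypothesis. Finally, for $B_r^{(iii)}$: rows in the top half are rows of $S_{r-1}$, again contributing $2^{r-1}-1$, while rows in the bottom half are rows of $S_{r-1}F_{r-1}$, and since right-multiplication by a permutation matrix permutes columns without altering row sums, those rows also have $2^{r-1}-1$ ones. Summing the contributions from the three blocks gives $1+(2^{r-1}-1)+(2^{r-1}-1)=2^r-1$ ones per row of $S_r$, completing the induction.

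There is no substantial obstacle; the only step that needs to be made explicit is that $F_{r-1}=I_{2^{r-1}-1}\otimes\bigl(\begin{smallmatrix}0&1\\1&0\end{smallmatrix}\bigr)$ is a permutation matrix, so right-multiplication by it preserves both row sums and column sums of $S_{r-1}$. Once that observation is in place, the argument amounts to adding up the sums from the three blocks.
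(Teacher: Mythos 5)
Your proof is correct and follows the intended route: the paper offers no written argument for this lemma, merely asserting that the counts are ``straightforward to verify by construction,'' and your induction on the three-block decomposition of $S_r$ (together with the observation that $F_{r-1}$ is a permutation matrix, so right-multiplication preserves row sums and permutes columns) is exactly the verification being alluded to. Nothing is missing.
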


Next, we examine the inner products among the columns, $\{s_j\}_{j \in \ZCT}$, of $S_r$, noting that these are encoded in the matrix,
$$
S_r^* S_r = \PARENTH{\langle s_b, s_a \rangle}_{a,b =1}^{c_r}.
$$
We write ${J}_{x,y}$ for the $x\times y$ matrix whose entries all equal $1$.

\begin{lemma}\label{lem_st_gram}
For each $r \in \mathbb{N}$, the matrix $G=S_r^* S_r$ is of the form
$$
G= 2^{r-2} \cdot \left[{J}_{c_r ,c_r} + {I}_{c_r/2} \otimes \left( \begin{array}{cc}1 & -1 \\ -1 & 1 \end{array}  \right)  \right].
$$
%Moreover, $S_t$ is the incidence matrix of a maximally orthoplectic $1$-design. 
\end{lemma}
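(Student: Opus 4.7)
\smallskip

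The natural approach is induction on $r$, using the recursive block decomposition of $S_r$.

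The base case $r=1$ is immediate: $S_1 = I_2$ gives $S_1^* S_1 = I_2$, and the right-hand side with $c_1=2$ reads $\frac{1}{2}[J_{2,2} + M] = I_2$, where $M := \bigl(\begin{smallmatrix} 1 & -1 \\ -1 & 1\end{smallmatrix}\bigr)$.

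For the inductive step, assume $G_{r-1} := S_{r-1}^*S_{r-1} = 2^{r-3}[J_{c_{r-1},c_{r-1}} + I_{c_{r-1}/2}\otimes M]$. Using the decomposition $S_r = (B_r^{(i)}\;\; B_r^{(ii)}\;\; B_r^{(iii)})$, the Gram matrix $S_r^*S_r$ decomposes into a $3\times 3$ array of blocks $A_{pq} = B_r^{(p)*}B_r^{(q)}$, which I would compute individually:
\begin{itemize}
\item $A_{11} = 2^{r-1} I_2$, because the two columns of $B_r^{(i)}$ have disjoint supports of size $2^{r-1}$.
\item $A_{12} = A_{13} = 2^{r-2}\,J_{2,c_{r-1}}$, since by Lemma~\ref{lem_halfones} each column of $S_{r-1}$ contains exactly $2^{r-2}$ ones, and both the top half ($\mathbf 1_{r-1},\mathbf 0_{r-1}$) and the bottom half ($\mathbf 0_{r-1},\mathbf 1_{r-1}$) of $B_r^{(i)}$ pick out one copy of $S_{r-1}$.
\item $A_{22} = 2G_{r-1}$ directly.
\item $A_{33} = G_{r-1} + F_{r-1}^* G_{r-1} F_{r-1}$. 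Since $F_{r-1}$ swaps each pair $(2k{-}1, 2k)$ and both $J$ and $I_{c_{r-1}/2}\otimes M$ are invariant under this simultaneous row-column swap (the swap acts within each $2\times 2$ block of $M$, and $M$ is fixed by it), we get $A_{33} = 2G_{r-1}$.
\item $A_{23} = G_{r-1}(I + F_{r-1})$. A short computation using $M\bigl(\begin{smallmatrix} 0 & 1 \\ 1 & 0\end{smallmatrix}\bigr) = -M$ gives $G_{r-1} F_{r-1} = 2^{r-3}[J - I_{c_{r-1}/2}\otimes M]$, so $A_{23} = 2^{r-2}\,J_{c_{r-1},c_{r-1}}$.
\end{itemize}

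To conclude, I would read off the same $3\times 3$ block decomposition from the target matrix $2^{r-2}[J_{c_r,c_r} + I_{c_r/2}\otimes M]$, using that $c_r/2 = 1 + c_{r-1}$: the leading $2\times 2$ block is $2^{r-2}(J_{2,2} + M) = 2^{r-1}I_2$, the two diagonal blocks of size $c_{r-1}$ are each $2^{r-2}[J_{c_{r-1},c_{r-1}} + I_{c_{r-1}/2}\otimes M] = 2G_{r-1}$, and all off-diagonal blocks are $2^{r-2}\,J$. This matches $A_{pq}$ entry by entry.

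The main subtlety is the interaction between the permutation matrix $F_{r-1}$ and the structured matrix $G_{r-1}$, namely verifying that $F_{r-1}$ preserves $G_{r-1}$ (needed for $A_{33}$) while converting $I_{c_{r-1}/2}\otimes M$ to its negative on the right (needed for $A_{23}$); both facts follow from the identity $M\bigl(\begin{smallmatrix} 0 & 1 \\ 1 & 0\end{smallmatrix}\bigr) = -M = \bigl(\begin{smallmatrix} 0 & 1 \\ 1 & 0\end{smallmatrix}\bigr)M$, which encodes precisely why the recursive pairing structure propagates correctly from $S_{r-1}$ to $S_r$.
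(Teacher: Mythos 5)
Your proof is correct and follows essentially the same route as the paper's: induction on $r$, computing the nine blocks $B_r^{(p)*}B_r^{(q)}$ of the Gram matrix and matching them against the block structure of the target via $c_r/2 = 1 + c_{r-1}$, with the key identities $G_{r-1}F_{r-1} = 2^{r-3}[J - I\otimes M]$ and $F_{r-1}^*G_{r-1}F_{r-1} = G_{r-1}$. Your explicit justification of these two facts via $\sigma M = M\sigma = -M$ is a slightly more detailed account of steps the paper asserts directly, but the argument is the same.
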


\begin{proof}
We prove the claimed form of $G$ by induction.  The claim is true for $r=1$, so let $r>1$ assume the claim holds for $r-1$.

Using the block structure in the definition of $S_r$, we have
\begin{align*}
{\tiny
G = 
\left(
\begin{array}{ccc}
\PARENTH{B_r^{(i)} }^* B_r^{(i)}  &   \PARENTH{B_r^{(i)} }^* B_r^{(ii)}  & \PARENTH{B_r^{(i)} }^* B_r^{(iii)}
\\
\PARENTH{B_r^{(ii)} }^* B_r^{(i)}  &   \PARENTH{B_r^{(ii)} }^* B_r^{(ii)}  & \PARENTH{B_r^{(ii)} }^* B_r^{(iii)}
\\
\PARENTH{B_r^{(iii)} }^* B_r^{(i)}  &   \PARENTH{B_r^{(iii)} }^* B_r^{(ii)}  & \PARENTH{B_r^{(iii)} }^* B_r^{(iii)}
\end{array}
\right).
}
\end{align*}
A direct application of the definition of $B_r^{(i)}$ and Lemma~\ref{lem_halfones} gives the values of the first row and first column of blocks in $G$,
$${\tiny G=
\left(
\begin{array}{ccc}
2^{ r-1} \cdot I_2  &  2^{r-2} \cdot J_{2 , c_{r-1} }& 2^{r-2} \cdot J_{2 ,c_{r-1}}
\\
2^{r-2} \cdot J_{ c_{r-1} , 2} &   \PARENTH{B_r^{(ii)} }^* B_r^{(ii)}  & \PARENTH{B_r^{(ii)} }^* B_r^{(iii)}
\\
2^{r-2} \cdot J_{ c_{r-1} , 2} &   \PARENTH{B_r^{(iii)} }^* B_r^{(ii)}  & \PARENTH{B_r^{(iii)} }^* B_r^{(iii)}
\end{array}
\right)}.$$
A direct application of the induction hypothesis gives us the center block,
$$
  \PARENTH{B_r^{(ii)} }^* B_r^{(ii)} = S_{r-1}^* S_{r-1}  + S_{r-1}^* S_{r-1}  = 2^{r-2} \cdot \left[J_{c_{r-1} , c_{r-1}} + I_{(c_{r-1}/2)} \otimes \left( \begin{array}{cc}1 &-1 \\ -1 &1 \end{array}  \right)  \right].
$$
Next, observe that by the induction assumption and definition of $F_{r-1}$, we have
\begin{align*}
S_{r-1}^* S_{r-1}
F_{r-1} &=
2^{r-3} \cdot \left[J_{c_{r-1} ,c_{r-1}} + I_{c_r/2} \otimes \left( \begin{array}{cc}1 & -1 \\ -1 & 1 \end{array}  \right)  \right]
\SBRACK{
I_{c_{r-1}/2} \otimes  \left( \begin{array}{cc} 0 & 1 \\ 1 & 0\end{array} \right)
}
\\
&=
2^{r-3} \cdot \left[J_{c_{r-1} , c_{r-1}} + I_{c_r/2} \otimes \left( \begin{array}{cc}-1 & 1 \\ 1& -1 \end{array}  \right)  \right],
\end{align*}
so it follows that
$$
 \PARENTH{B_r^{(ii)} }^* B_r^{(iii)} 
=
  S_{r-1}^* S_{r-1}  + S_{r-1}^* S_{r-1}  F_{r-1} \\
= 
 2^{r-2} \cdot J_{c_{r-1} , c_{r-1}},
$$
and, by symmetry, we also have
$$
 \PARENTH{B_r^{(iii)} }^* B_r^{(ii)} 
= 
 2^{r-2} \cdot J_{c_{r-1} ,c_{r-1}}.
$$
Finally, observe that $F_{r-1}^* S_{r-1}^* S_{r-1} F_{r-1} =  S_{r-1}^* S_{r-1}$, so
\begin{align*}
 \PARENTH{B_r^{(iii)} }^* B_r^{(iii)} 
&= S_{r-1}^* S_{r-1}  +F_{r-1}^* S_{r-1}^* S_{r-1} F_{r-1} 
\\
&= 2^{r-2} \cdot \left[J_{c_{r-1} , c_{r-1}} + I_{c_{r-1}/2} \otimes \left( \begin{array}{cc}1 &-1 \\ -1 &1 \end{array}  \right)  \right].
\end{align*}
This establishes that the nine blocks match the claimed form of $G$. 
\end{proof}

%Moreover, $S_t$ is the incidence matrix of a maximally orthoplectic $1$-design. 

For each $r \in \BB N$, we let $\BLOCKSET_r$ be  the set of blocks in $\ZM$ defined in accordance with the columns of $S_r$ by
$
\CAL{J}_b = \CBRACK{a: (S_r)_{a,b}=1},
$
where $(S_r)_{a,b}$ denotes the $(a,b)$-th entry of $S_t$.

Although the block 2-design property of $\BLOCKSET_r$ is implicit in the result on maximal orthoplectic fusion frames stated below, we show it in a separate
proof to illustrate the additional constraints realized by the construction.

\begin{prop}
For each $r \in \BB{N}$, $S_r$ 
is the incidence matrix of a $2$-$(m,m/2,m/2-1)$ block design, with $m=2^r$.
%satisfies %$H=S_t S_t^*$ is of the form
%%$$
 %  S_t S_t^* = 2^{t-1} I_{2^t} + (2^{t-1}-1)J_{2^t,2^t} \, . 
%$$
%In particular, $(\ZM, \BLOCKSET_t)$ is a $2$-design.
%The $1$-design with incidence matrix $S_t$ is a  $2$-$(m,m/2,\mu)$ design.
\end{prop}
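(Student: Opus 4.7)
The plan is to translate the block design conditions into statements about the matrices $S_r S_r^*$ and induct on $r$. The block-size condition is immediate: by Lemma on column sums, each column of $S_r$ has $2^{r-1}$ ones, so each $\CAL{J}_b \in \BLOCKSET_r$ has size $l = m/2$. The 2-balance condition with $\lambda = m/2-1$ amounts to the identity
$$
S_r S_r^* = 2^{r-1} I_{2^r} + (2^{r-1} - 1) J_{2^r, 2^r},
$$
since the diagonal then matches the row sum $2^r - 1$ predicted by Lemma, and each off-diagonal entry $(S_r S_r^*)_{a,a'}$ counts exactly the blocks containing both $a$ and $a'$. I would prove this identity by induction on $r$, the base case $r=1$ being trivial from $S_1 = I_2$.

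Before the main induction, I would establish two preparatory identities. First, the columns of $S_r$ come in \emph{complementary pairs}: for each valid $k$, the sum $s_{2k-1}+s_{2k}$ equals the all-ones column vector of length $2^r$. This follows by a parallel induction on $r$ using the block decomposition of $S_r$: the two columns of $B_r^{(i)}$ are complementary by construction, the pairs in $B_r^{(ii)}$ inherit the property directly from $S_{r-1}$, and so do the pairs in $B_r^{(iii)}$, once one observes that $F_{r-1}$ swaps adjacent columns of $S_{r-1}$ and hence preserves the pairing. Second, combining the complementary-pair property with Lemma on row sums yields
$$
S_r(I_{c_r} + F_r) S_r^* = (2^r - 1)\, J_{2^r, 2^r},
$$
because every column of $S_r(I_{c_r} + F_r)$ is the all-ones vector, so $S_r(I_{c_r} + F_r)$ is a rank-one all-ones matrix, and then the row-sum formula delivers the scalar $(2^r-1)$.

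For the inductive step on the main identity, I would decompose
$$
S_r S_r^* = B_r^{(i)} (B_r^{(i)})^* + B_r^{(ii)} (B_r^{(ii)})^* + B_r^{(iii)} (B_r^{(iii)})^*
$$
and evaluate each summand in its $2 \times 2$ block structure with blocks of size $2^{r-1} \times 2^{r-1}$. The first summand is block-diagonal with $J_{2^{r-1},2^{r-1}}$ in both diagonal blocks; the second has all four blocks equal to $M := S_{r-1} S_{r-1}^*$; the third has $M$ on the diagonal blocks and $S_{r-1} F_{r-1} S_{r-1}^*$ on the off-diagonal blocks (using that $F_{r-1}^* = F_{r-1}$ and $F_{r-1}^2 = I$). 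By the inductive hypothesis, $M = 2^{r-2} I + (2^{r-2} - 1) J$, and the preparatory identity applied at level $r-1$ yields $S_{r-1} F_{r-1} S_{r-1}^* = (2^{r-1} - 1) J - M = 2^{r-2}(J - I)$. Adding the three summands block by block collapses everything to the desired form $2^{r-1} I_{2^r} + (2^{r-1} - 1) J_{2^r, 2^r}$, which closes the induction and gives the 2-design property with $\lambda = m/2 - 1$.

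The main obstacle, and the pivot of the whole argument, is the complementary-pair identity. It is what converts $F_{r-1}$ from an opaque permutation into a symmetry compatible with the row-sum structure, so that the otherwise intractable cross-term $S_{r-1} F_{r-1} S_{r-1}^*$ acquires a closed form. Once that identity is in hand, the remaining block algebra is routine and the entire induction unfolds from Lemma on row and column sums.
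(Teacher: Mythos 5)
Your proof is correct, and it reaches the same target identity as the paper --- $S_r S_r^* = 2^{r-1} I_{2^r} + (2^{r-1}-1)J_{2^r,2^r}$, proved by induction via the three-block decomposition of $S_r$ --- but it handles the troublesome cross term $S_{r-1}F_{r-1}S_{r-1}^*$ differently. The paper runs a \emph{joint} induction, carrying along the ancillary identity $S_rF_rS_r^* = 2^{r-1}(J_{2^r,2^r}-I_{2^r})$ and re-deriving it at each level by a second block computation using the explicit block form of $F_r$. You instead isolate the structural reason this identity holds: the columns of $S_r$ come in complementary pairs $s_{2k-1}+s_{2k}=\mathbf{1}$ aligned with the transpositions in $F_r$, so that $S_r(I_{c_r}+F_r)S_r^* = (2^r-1)J_{2^r,2^r}$ and the cross term is forced by the main identity at the previous level. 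This is a genuine simplification: it replaces the paper's second nine-block verification with a one-line consequence of the pairing, and it makes visible why $F_r$ is the right involution to use in the recursion --- it exchanges each block with its complement. (It also connects the construction to the partition into complementary pairs that the paper's Corollary on maximally orthoplectic families asserts must exist.) The only detail worth spelling out in a final write-up is that the pairing of columns induced by the concatenation $\left(B_r^{(i)}\ B_r^{(ii)}\ B_r^{(iii)}\right)$ really does coincide with the pairs $(2k-1,2k)$ swapped by $F_r$; this holds because $B_r^{(i)}$ contributes exactly two columns and $c_{r-1}$ is even, but it is an alignment your argument silently relies on.
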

\begin{proof}
Since the block $2$-design property of $\BLOCKSET_r$ is equivalent to the statement that every row of $S_r$ has constant sum and any two distinct row vectors have a constant inner product, it is sufficient to prove the matrix identity
$$
   S_ rS_r^* = 2^{r-1} I_{2^r} + (2^{r-1}-1)J_{2^r,2^r} \, . 
$$
%compute the inner products between the rows of $S_t$, noting that these are encoded as the entries of the matrix $S_t S_t^*$.

%The 2-design property is equivalent to the statement that any row vector has a constant sum and any 
%two different row vectors of $S_t$
%have a constant inner product. Alternatively,
%this can be expressed as a claim for the frame operator,
%$$
%   S_t S_{t}^* = 2^{t-1} I_{2^t} + (2^{t-1}-1)J_{2^t,2^t} \, . 
%$$
We prove this by induction jointly with an ancillary claim,
$$
  S_r F_r S_r^* = 2^{r-1}J_{2^r,2^r} -2^{r-1} I_{2^r}\, .
$$
To begin,
$S_1$ satisfies $S_1 S_1^* = I_2$ and
 $S_1 F_1 S_1 = F_1$, so both identities hold for $r=1$.
 
 Assuming this is also true for $S_{r-1}$ and $F_{r-1}$, we compute
 $$
     S_r S_r^* = \left( \begin{array}{cc} J_{2^{r-1},2^{r-1}} + 2 S_{r-1} S_{r-1}^* & S_{r-1} S_{r-1}^*+ S_{r-1} F_{r-1} S_{r-1}^*\\
                 S_{r-1} S_{r-1}^*+  S_{r-1} F_{r-1} S_{r-1}^* & J_{2^{r-1},2^{r-1}} + S_{r-1} S_{r-1}^* + S_{r-1} F_{r-1}^2 S_{r-1}^*
                 \end{array} \right).
 $$
 Now using the induction assumption and $F_{r-1}^2 = I_{2^{r-1}}$, we get
 $$
    S_r S_r^* = 2(2^{r-2}) I_{2^r} + 2(2^{r-2}-1)J_{2^r,2^r} + J_{2^r,2^r},
    $$
    which simplifies to the claimed identity.
    Moreover, using the fact that $F_r$ has the block form,
        $$F_r = \PARENTH{\begin{array}{cc} F_1 & 0_{2 , 2 c_{r-1} } \\  0_{ 2 c_{r-1} , 2 }& I_2 \otimes F_{r-1} \end{array}},$$ where $0_{a,b}$ is the $a \times b$ zero matrix, 
a straightforward computation yields
    $$
        S_r F_r S_r^* = \left( \begin{array}{cc}  2 S_{r-1} F_{r-1} S_{r-1}^* & J_{2^{r-1},2^{r-1}}+ S_{r-1} F_{r-1} S_{r-1}^*+ S_{r-1} S_{r-1}^*\\
                 J_{2^{r-1},2^{r-1}}+ S_{r-1} F_{r-1} S_{r-1}^*+  S_{r-1} S_{r-1}^* & 2 S_{r-1} F_{r-1} S_{r-1}^* \end{array} \right)
    $$ 
    and using the induction assumption gives
    $$
      S_r F_r S_r^* = 2^{r-1} \left( \begin{array}{cc}   J_{2^{r-1},2^{r-1}} -  I_{2^{r-1}} &  J_{2^{r-1},2^{r-1}}\\
                  J_{2^{r-1},2^{r-1}} &  J_{2^{r-1},2^{r-1}} -  I_{2^{r-1}} \end{array} \right) = 2^{r-1} (J_{2^r,2^r} -  I_{2^r}) \, .
    $$
\end{proof}

Finally, we state the main theorem of this section, which summarizes the construction of maximal orthoplectic tight fusion frames.  

\begin{thm}\label{th_main}
Let $r \in \BB{N}$, where $r$ is even if $\BB{F}=\BB{R}$. If $m=2^r$ and  $\{\CAL B_k\}_{k \in K}$ is a maximal collection of mutually unbiased bases
for for $\BB{F}^m$, so $|K|=\KF$, then
$$
\CAL{F} = 
\CBRACK{
P_{\CAL{J}}^{(k)} : k \in K,  \CAL{J} \in \BLOCKSET_{r}
}
$$
forms a tight, maximal orthoplectic fusion frame,
where each $P_\CAL{J}^{(k)}$ is the $\CAL{J}$-coordinate projection with respect to $\CAL{B}_k$.
\end{thm}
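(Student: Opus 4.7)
The plan is to deduce the theorem directly from Theorem~\ref{th_costruct_OGFF} once the hypotheses of that theorem are verified for $\BLOCKSET_r$. The heavy structural work has already been carried out in Lemmas~\ref{lem_halfones} and~\ref{lem_st_gram}, so all that remains is to read off the relevant parameters and match them against the definition of maximally orthoplectic together with the existence criterion for maximal sets of mutually unbiased bases.

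First, I would dispose of the mutually unbiased bases hypothesis. In the complex case, $m = 2^r$ is a prime power for every $r \in \BB{N}$, so Theorem~\ref{th_prime_mubs_exist} supplies a family of $\KF = m + 1$ mutually unbiased bases. In the real case, the hypothesis that $r$ is even makes $m = 2^r = 4^{r/2}$ a power of four, so Theorem~\ref{th_prime_mubs_exist} again applies to yield $\KF = m/2 + 1$ mutually unbiased bases. Thus a set $\{\CAL B_k\}_{k \in K}$ with $|K| = \KF$ exists under the stated parity condition, with no additional work required.

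Next, I would verify that $\BLOCKSET_r$ is a maximally orthoplectic collection of subsets of $\ZM$ with common size $l = m/2$. Lemma~\ref{lem_halfones} shows that every column of $S_r$ has exactly $2^{r-1}$ ones, so $|\CAL J| = 2^{r-1} = m/2$ for each $\CAL J \in \BLOCKSET_r$, matching the required value $l = m/2$ from Corollary~\ref{cor:half_ss}. The recurrence $c_{r+1} = 2 c_r + 2$ with $c_1 = 2$ yields $|\BLOCKSET_r| = c_r = 2^{r+1} - 2 = 2(m-1)$, which is the cardinality in the definition of maximally orthoplectic. For cohesiveness, I would appeal to Lemma~\ref{lem_st_gram}: since the columns of $S_r$ are $0/1$ indicator vectors, $(S_r^* S_r)_{a,b} = |\CAL J_a \cap \CAL J_b|$, and the off-diagonal entries of the Gram matrix $S_r^* S_r = 2^{r-2}\bigl[J_{c_r, c_r} + I_{c_r/2} \otimes \bigl(\begin{smallmatrix} 1 & -1 \\ -1 & 1\end{smallmatrix}\bigr)\bigr]$ are either $0$ within each $2 \times 2$ diagonal block or $2^{r-2}$ elsewhere. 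Hence $\max_{\CAL J \neq \CAL J'} |\CAL J \cap \CAL J'| = 2^{r-2} = l^2/m$, so $\BLOCKSET_r$ is $l^2/m$-cohesive and, together with the cardinality count, maximally orthoplectic.

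With both the mutually unbiased basis hypothesis and the maximally orthoplectic condition established, the second part of Theorem~\ref{th_costruct_OGFF} applies directly to give that $\CAL F$ is a tight, maximal orthoplectic fusion frame, which is the conclusion. There is essentially no obstacle in this argument, since Lemmas~\ref{lem_halfones} and~\ref{lem_st_gram} do all of the combinatorial work; the only points requiring care are reading off the maximum off-diagonal Gram entry from the block structure in Lemma~\ref{lem_st_gram} and matching the parity hypothesis on $r$ to the existence result for maximal real mutually unbiased bases.
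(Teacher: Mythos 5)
Your proposal is correct and follows essentially the same route as the paper: verify via Lemma~\ref{lem_halfones} and Lemma~\ref{lem_st_gram} that $\BLOCKSET_r$ consists of $2(m-1)$ blocks of size $m/2$ with maximal pairwise intersection $2^{r-2}=l^2/m$, hence is maximally orthoplectic, and then invoke the second part of Theorem~\ref{th_costruct_OGFF}. The only cosmetic differences are that you additionally justify the existence of the maximal mutually unbiased bases (which the theorem takes as a hypothesis), while the paper explicitly records the block $1$-design property of $\BLOCKSET_r$; neither affects the validity of the argument.
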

\begin{proof}
 It follows directly from Lemma~\ref{lem_halfones} that $\BLOCKSET_r$ is a $1$-$(m,l, m-1)$ block design, where $m=2^r$, $l=2^{r-1}$ and $| \BLOCKSET_r| = c_r = 2^{r+1}-2$.  
If $\{s_j\}_{j\in \ZCT}$ denotes the columns of $S_r$, then the Gram matrix $S_r^* S_r$ encodes the intersections of the blocks by $\langle s_a,s_b\rangle~= ~\left| \CAL{J}_a \cap \CAL{J}_b\right|$, so Lemma~\ref{lem_st_gram} implies that $\max\limits_{a \ne b} \left| \CAL{J}_a \cap \CAL{J}_{b} \right| = 2^{r-2}$.  This means that $\BLOCKSET_r$ is $c$-cohesive, where $c=\frac{l^2}{m} = \frac{m}{4}$, and  since $| \BLOCKSET_r| = 2^{r+1}-2 = 2(m-1)$, we conclude that $\BLOCKSET_r$ is a maximally orthoplectic block $1$-design.

%then there exists a  tight maximal orthoplectic $\PARENTH{2 \DF, m/2, m}$-fusion frame for $\BB{F}^m$.
%\end{thm}
%\begin{proof}
Finally, using a maximal set of mutually unbiased bases $\{\CAL B_k\}_{k \in K}$ and the
 maximally orthoplectic block $1$-$(m,m/2,m-1)$ design  $\BLOCKSET_r$, Theorem~\ref{th_costruct_OGFF} shows that the set
$
\CAL{F} = 
\{
P_{\CAL{J}}^{(k)} : k \in K$, $\CAL{J} \in \BLOCKSET_{r}
\}
$
forms a maximal orthoplectic fusion frame,
where each $P_\CAL{J}^{(k)}$ is the $\CAL{J}$-coordinate projection with respect to $\CAL{B}_k$.
% consisting of coordinate projections $\CBRACK{P_{\CAL J}^{(k)}: k \in K, \CAL J \in \BLOCKSET}$
%where $|K| = \KF$ and the coordinate projections $P_{\CAL J}^{(k)}$ are formed 
%by
%a set of maximally mutually unbiased bases $\{\CAL{B}_k\}_{k \in K}$  and 
%$\CAL J \in  \BLOCKSET$ where $\BLOCKSET$ is the $1$-$(m,m/2,m-1)$ design with incidence matrix $S_t$.
\end{proof}

\end{document}